\newtheorem{sublem}{Lemma}[section]
\newtheorem{thm}{Theorem}
\newtheorem{subthm}{Theorem}[section]
\newtheorem{prop}{Proposition}
\newtheorem{subprop}{Proposition}[section]
\theoremstyle{definition}
\newtheorem{remark}{Remark}
\newtheorem*{example}{Example}
\newcommand{\cD}{\lceil D\rceil}
\renewcommand{\O}{\mathcal{O}}
\newcommand{\M}{\mathcal{M}}
\newcommand{\PP}{\mathbb{P}}
\newcommand{\CC}{\mathbb{C}}
\newcommand{\QQ}{\mathbb{Q}}
\newcommand{\ZZ}{\mathbb{Z}}
\newcommand{\HH}{\mathbb{H}}
\renewcommand{\l}{\tilde{l}}
\numberwithin{equation}{section}
\title{Vanishing Theorem for Hodge Ideals on Smooth Hypersurfaces}
\author{Anh Duc Vo}
\newcommand{\Addresses}{{
  \vspace{\bigskipamount}
  \footnotesize
  \textsc{Department of Mathematics, Harvard University, Cambridge, MA 02138, USA}\par\nopagebreak
  \textit{E-mail address}: \texttt{ducvo@math.harvard.edu}
}}
\begin{document}

\maketitle

\begin{abstract}
    We use a Koszul-type resolution to prove a weak version of Bott's vanishing theorem for smooth hypersurfaces in $\PP^n$ and use this result to prove a vanishing theorem for Hodge ideals associated to an effective Cartier divisor on a hypersurface. This extends an earlier result of Musta\c t\u a  and Popa.    
\end{abstract}

\tableofcontents

\section{Introduction}

\indent Let $X$ be a smooth complex variety, and $D$ an effective $\QQ$-divisor on $X$. Recently, Musta\c t\u a and Popa introduced the notion of Hodge ideals $I_k(D)$ in a series of papers \cite{Hodge_ideals}, \cite{Hodge_ideals_Q1} and \cite{Hodge_ideals_Q2}. These ideals are generalizations of multiplier ideals; in fact, one can show that $I_0(D) = \mathcal{J}((1-\epsilon)D)$, the multiplier ideal of $\mathbb{Q}$-divisor $(1-\epsilon)D$, for any $0 < \epsilon \ll 1$. They were introduced with the goal of understanding the singularities and Hodge theory of divisors on smooth varieties; for example, \cite[Corollary H]{Hodge_ideals} gives the bound for the degree of hypersurfaces in  $\PP^n$ on which a set of isolated singular points of a divisor imposes independent conditions. 

\vspace{\medskipamount}

Multiplier ideals satisfy the celebrated Nadel vanishing theorem; see \cite[Theorem 9.4.8]{positivity2}. In their paper \cite[Theorem F]{Hodge_ideals}, Musta\c t\u a and Popa obtained an analogous result for Hodge ideals, but required some technical assumptions. When $X$ is a projective space or an abelian variety, they also pointed out that these assumptions are in fact not necessary \cite[Theorem 25.3 and 28.2]{Hodge_ideals}. In this paper, we improve their vanishing result when $X$ is a smooth hypersurface in $\PP^n$. We note that a stronger result than the general statement has also been proven by Dutta in the case of a toric variety \cite[Theorem A]{vanishing_toric}. 

\vspace{\medskipamount}

To formulate the problem, let $X$ be a smooth hypersurface of degree $d\ge 2$ in $\PP^n$ with $n \ge 4$, and $D$ an effective $\mathbb{Q}$-divisor on $X$. A special property of projective spaces and toric varieties, which allows us to understand the cohomology of Hodge ideals, is that there exists an Euler-type exact sequence. This exact sequence can be used to construct a Koszul-type resolution of sheaves of differential forms $\Omega^p_X$. Hypersurfaces in $\PP^n$ inherit the Euler sequence via restriction; however, these do not give a resolution of $\Omega^p_X$, but $\Omega^p_{\PP^n}|_X$. This makes it challenging to obtain a nice set of conditions for the vanishing theorem; see Section \ref{sec_arbsing}. Fortunately, if we require the divisor $D$ to have at worst isolated singularities, i.e. the support of $D$ has at worst isolated singularities, we obtain the following theorems in Sections \ref{sec_iso4} and \ref{sec_iso3}. Notice that if $n\ge 4$, any line bundle on $X$ is the restriction of a line bundle on $\PP^n$ due to the Grothendieck-Lefschetz theorem \cite[Corollary 3.3]{amplesubvar}, so we consider the cases $n\ge 4$ and $n=3$ separately. 

\begin{thm} \label{vanishing_isolated_sing}
    Let $X$ be a smooth hypersurface of degree $d\ge 2$ in $\PP^n$, with $n \ge 4$, and $D$ an effective $\mathbb{Q}$-divisor on $X$ of degree $m$, where $m$ is a rational number. Assume further that $D$ has, at worst, isolated singularities. Denote by $Z$ the support of $D$, and by $H$ the hyperplane divisor on $X$. Let  $\O_X(Z-\cD)\cong \O_X(-a)$ for some non-negative integer $a$. Then: 
    \begin{enumerate}
        \item For $2 \le i \neq n-1$, $k\ge 0$ and all $l$, we have 
        \[
            H^i\big(X, \omega_X(kZ) \otimes I_k(D) \otimes \O_X(l)\big) = 0.
        \]
        \item For $k \ge 0$ and $(l-a)H + kZ$ ample, we have 
        \[
            H^{n-1}\big(X, \omega_X(kZ) \otimes I_k(D) \otimes \O_X(l)\big) = 0.
        \]
        \item For $k \ge 1$, $l \ge \max\{k(d-2) + m, k(d-2) + (n-2)(d-1) + a\}$, we have 
        \[
            H^{1}\big(X, \omega_X(kZ) \otimes I_k(D) \otimes \O_X(l)\big) = 0.
        \]
    \end{enumerate}
\end{thm}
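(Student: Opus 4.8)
The plan is to exploit the isolated-singularity hypothesis to replace the ideal $I_k(D)$ by a single line bundle up to a sheaf supported at finitely many points, and then read off the three statements from the cohomology of line bundles on $X$. The key structural input is that, away from the singular points of $D$, the divisor $Z$ is smooth and $D$ is locally a rational multiple of a smooth branch, where the Hodge ideals are computed explicitly by Musta\c t\u a and Popa: there they are all equal, independently of $k$, to the generic value $\O_X(Z-\cD)\cong\O_X(-a)$. Consequently there is an inclusion of sheaves $I_k(D)\hookrightarrow\O_X(Z-\cD)$ whose cokernel $\mathcal{T}$ is a skyscraper supported at the singular points of $D$. Tensoring the resulting short exact sequence with the line bundle $M:=\omega_X(kZ)\otimes\O_X(l)$ gives
\[
    0\to M\otimes I_k(D)\to P\to \mathcal{T}'\to 0,
\]
where $P:=M\otimes\O_X(-a)\cong\omega_X\otimes\O_X\big((l-a)H+kZ\big)$ and $\mathcal{T}'$ is again a skyscraper. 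Since $n\ge 4$, the Grothendieck--Lefschetz theorem forces $\operatorname{Pic}(X)=\ZZ\cdot H$, so $P$ is an honest power of $\O_X(1)$; this is exactly where the hypothesis $n\ge 4$ enters, and the reason the case $n=3$ must be treated separately.

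With this reduction in place, parts (1) and (2) are immediate. Because $\mathcal{T}'$ has zero-dimensional support, its higher cohomology vanishes, so the long exact sequence gives $H^i\big(X,M\otimes I_k(D)\big)\cong H^i(X,P)$ for every $i\ge 2$. For $2\le i\le n-2$ this is the cohomology of a power of $\O_X(1)$ in a middle degree, which vanishes on any smooth hypersurface of dimension $n-1$ for all twists; this yields part (1) with no positivity hypothesis and for all $l$. For $i=n-1$ Serre duality identifies $H^{n-1}(X,P)$ with $H^0\big(X,\O_X(-(l-a)H-kZ)\big)^{*}$, which vanishes precisely when $(l-a)H+kZ$ is ample, giving part (2).

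Part (3) is the delicate case. Here the long exact sequence reads
\[
    H^0(X,P)\xrightarrow{\ r\ } H^0(X,\mathcal{T}')\to H^1\big(X,M\otimes I_k(D)\big)\to H^1(X,P),
\]
and since $1\le n-2$ the middle-degree vanishing again gives $H^1(X,P)=0$ for $n\ge 4$. Thus $H^1\big(X,M\otimes I_k(D)\big)$ is exactly the cokernel of $r$, and its vanishing is equivalent to the surjectivity of $r$, i.e.\ to the requirement that the (fat) points cut out by $\mathcal{T}'$ impose independent conditions on the linear system $|P|$. The plan is to guarantee this by a Castelnuovo--Mumford-type regularity estimate: one bounds the colength and regularity of the point scheme supporting $\mathcal{T}'$ in terms of the local structure of $I_k(D)$ at the singularities, and then forces $P$ to be regular enough. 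Tracking two separate sufficient conditions --- one coming from the degree $m$ of $D$, one from a regularity bound of order $(n-2)(d-1)$ together with the correction $a$ --- produces the stated bound $l\ge\max\{k(d-2)+m,\ k(d-2)+(n-2)(d-1)+a\}$; the weak Bott vanishing of the previous sections enters precisely in verifying the intermediate cohomology vanishings required for this regularity estimate.

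The main obstacle is this last surjectivity. Parts (1) and (2) are formal once the skyscraper reduction and Grothendieck--Lefschetz are in hand, but for part (3) one must understand the scheme $\mathcal{T}'$ well enough --- its length, and how the sections of a power of $\O_X(1)$ separate its jets --- to pin down an explicit bound on $l$ and to check that this single bound controls all the relevant cohomology at once. Aligning the contributions $k(d-2)$, $(n-2)(d-1)$ and $a$ into the stated maximum is where the real work lies.
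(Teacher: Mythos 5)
Your reduction for parts (1) and (2) coincides with the paper's: you tensor the inclusion $I_k(D)\hookrightarrow\O_X(Z-\cD)$ (whose cokernel is a skyscraper on the zero-dimensional scheme $Z_k$, by the isolated-singularity hypothesis) to get
\[
0\to \omega_X(kZ)\otimes I_k(D)\otimes\O_X(l)\to \omega_X\otimes\O_X\big((l-a)H+kZ\big)\to \mathcal{T}'\to 0,
\]
and then use that intermediate cohomology of any twist of $\O_X$ on a hypersurface vanishes, plus Serre duality in degree $n-1$. With $\operatorname{Pic}(X)=\ZZ H$ from Grothendieck--Lefschetz this is exactly how the paper proves (1) and (2), and these parts of your argument are correct.

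Part (3), however, has a genuine gap. Identifying $H^1\big(X,M\otimes I_k(D)\big)$ with $\operatorname{coker}\big(H^0(X,P)\to H^0(X,\mathcal{T}')\big)$ is a restatement, not progress: since $H^1(X,P)=0$, the surjectivity of $r$ is \emph{literally equivalent} to the vanishing you want, and the promised Castelnuovo--Mumford-type estimate for $\mathcal{T}'$ is never produced. No a priori bound on the colength or jet structure of $I_k(D)$ at a singular point in terms of $n,d,m,k,a$ is available at this stage; worse, the only length bound in the paper (Theorem C, part (1)) is itself \emph{deduced from} Theorem A(3), so trying to establish such a bound first would be circular. Note also that $P$ and $\mathcal{T}'$ see only line-bundle data, so it is unclear how the degree $m$ of $D$ could enter your estimate at all. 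The paper's actual mechanism is entirely different: one inducts on $k$ via
\[
0\to \omega_X((k-1)Z)\otimes I_{k-1}(D)\otimes\O_X(\tilde{l})\to \omega_X(kZ)\otimes I_k(D)\otimes\O_X(\tilde{l})\to \M_k(D)\otimes\O_X(\tilde{l})\to 0,
\]
reducing to $H^1\big(X,\M_k(D)\otimes\O_X(\tilde{l})\big)=0$; this is attacked through the spectral sequence of the twisted Spencer complex $Sp_{k-n+1}(\M_\bullet(D))$, whose hypercohomology vanishes in the relevant degrees by Chen's Saito-type vanishing (Theorem B) --- this is where the term $m$ enters --- so that everything reduces to killing $E_1^{k-r,r}$ for $1\le r\le k$. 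Those terms are handled by $\dim Z_{k-r}=0$, Nakano vanishing, and, for the critical case $r=1$ involving $\Omega^{n-2}_X$, the borderline Nakano vanishing of Theorem D fed through the Koszul resolution of $\Omega^{n-2}_{\PP^n}|_X$; this is the source of the constants $k(d-2)$ and $(n-2)(d-1)$. Your sketch invokes the weak Bott vanishing only in passing and contains none of this Hodge-module input, so part (3) as proposed does not go through.
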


\noindent Note that when $D$ is a reduced integral divisor (as in \cite{Hodge_ideals_Q1}), we have $a = 0$ and  $Z=D$, and the conditions (2) and (3) become
\begin{itemize}
    \item For $k \ge 0$ and $l + km > 0$, we have 
        \[
            H^{n-1}\big(X, \omega_X(kD) \otimes I_k(D) \otimes \O_X(l)\big) = 0.
        \]
    \item For $k \ge 1$, $l \ge \max\{k(d-2) + m, k(d-2) + (n-2)(d-1)\}$, we have 
        \[
            H^{1}\big(X, \omega_X(kD) \otimes I_k(D) \otimes \O_X(l)\big) = 0.
        \]
\end{itemize}

\begin{thm} \label{vanishing_surface}
    Let $X$ be a smooth hypersurface in $\PP^3$ of degree $d \ge 2$, $D$ an effective $\QQ$-divisor, and $L$ a line bundle on $X$. Then
    \begin{enumerate}
        \item For $i\ge 2$, if $L((k+1)Z-\cD)$ is ample, then  
        \[
            H^i\big(X, \omega_X(kZ) \otimes I_k(D) \otimes L\big) = 0.
        \]
        \item For $i=1$, if $(L-\cD)\big((k-1)(2-d)\big) \text{ and } L(jZ-\cD)\big((k-j+2)(2-d)\big)$ are ample for $1 \le j \le k$, then 
        \[
            H^1\big(X, \omega_X(kZ) \otimes I_k(D) \otimes L\big) = 0.
        \]
    \end{enumerate}
\end{thm}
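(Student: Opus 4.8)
My plan is to follow the strategy used for Theorem~\ref{vanishing_isolated_sing}, specialised to $\dim X = 2$ and with the numerical conditions on $l$ replaced by ampleness of honest line bundles, since for $n=3$ the Picard group of $X$ is generally not generated by $\O_X(1)$. The starting point is the filtered left $\mathcal{D}$-module $\M=\O_X(*Z)$ underlying the mixed Hodge module attached to the $\QQ$-divisor $D$, whose Hodge filtration $F_\bullet\M$ encodes the ideals $I_k(D)$; up to the correction by $\cD$ coming from the $\QQ$-structure, the sheaf $\omega_X(kZ)\otimes I_k(D)$ appears as (a twist of) the top term of the filtered de Rham complex $F_{k-2}\,\mathrm{DR}(\M)$. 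Since $\dim X = 2$, the associated graded
\[
\mathrm{gr}^F_{k-2}\,\mathrm{DR}(\M)\otimes L \;=\; \big[\,\O_X\otimes\mathrm{gr}^F_{k-2}\M \to \Omega^1_X\otimes\mathrm{gr}^F_{k-1}\M \to \omega_X\otimes\mathrm{gr}^F_{k}\M\,\big]\otimes L
\]
is a three-term complex in degrees $-2,-1,0$, and Saito's vanishing theorem gives $\HH^i=0$ for $i>0$ whenever the twisting bundle is ample. The exact sequences $0\to F_{j-1}\M\to F_j\M\to\mathrm{gr}^F_j\M\to 0$ relate $I_j(D)$ to $I_{j-1}(D)$ and will drive an induction on $k$.

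For part (1) the target occupies the degree-$0$ (top) spot, so its $H^2$ is the corner term $E^{0,2}_1$ of the hypercohomology spectral sequence of this complex. Since $\HH^2=0$ by Saito vanishing once $L((k+1)Z-\cD)$ is the requisite ample twist, and the only differential meeting this corner in cohomological degree $2$ comes from the $\Omega^1_X$-term, the vanishing $H^2\big(X,\omega_X(kZ)\otimes I_k(D)\otimes L\big)=0$ reduces to $H^2$ of an $\Omega^1_X$-term; on a surface this collapses to essentially a direct application of Saito vanishing for the stated ample line bundle, which is why part (1) requires only the single hypothesis that $L((k+1)Z-\cD)$ be ample.

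Part (2) is where the real work lies. Now the target again sits in the top spot but we seek $H^1$, which cannot be read off from the end of the complex; I would instead run the full spectral sequence and control the middle term $\Omega^1_X\otimes\mathrm{gr}^F_{k-1}\M\otimes L$, inducting on $k$ via the short exact sequences above. At the $j$-th stage one peels off a copy of $Z$, producing an $\Omega^1_X$-term twisted by $L(jZ-\cD)$ together with the conormal correction $(2-d)$ — recall $\omega_X\cong\O_X(d-4)$, and the Koszul-type resolution of $\Omega^1_X$ assembled from the Euler and conormal sequences introduces a factor of $d-2$ at each step. The hypotheses that $(L-\cD)\big((k-1)(2-d)\big)$ and $L(jZ-\cD)\big((k-j+2)(2-d)\big)$ be ample for $1\le j\le k$ are precisely what make the weak Bott-type vanishing theorem applicable to each $\Omega^1_X$-term, and Saito vanishing applicable to the end terms, at every stage; together they force all off-diagonal contributions to $\HH^1=0$ to vanish, yielding $H^1\big(X,\omega_X(kZ)\otimes I_k(D)\otimes L\big)=0$.

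The hard part is the bookkeeping in part (2): I expect the main obstacle to be matching, stage by stage, the exact twists produced by the differentials of the graded de Rham complex against those for which the earlier weak Bott vanishing theorem is stated, and verifying that the accumulated conormal corrections are exactly $(k-j+2)(2-d)$. The remaining passage from the integral to the $\QQ$-divisor case — where $\cD$ replaces $Z$ in the filtered pieces — is routine given the construction of Musta\c t\u a and Popa and does not alter the shape of the argument.
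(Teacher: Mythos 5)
Your overall skeleton matches the paper's: induction on $k$ via the exact sequences relating $I_{k-1}(D)$ to $I_k(D)$, the three-term graded de Rham/Spencer complex on the surface, Saito-type vanishing for its hypercohomology, and a spectral sequence argument reducing everything to the terms $E_1^{1,1}$ and $E_1^{0,2}$. But the decisive step of part (2) fails as you propose it. The $\Omega^1_X$-term you must control is $H^1\big(X, \Omega^1_X \otimes L((k-1)Z) \otimes I_{k-1}(D)\big)$ --- twisted by the Hodge ideal --- and Proposition \ref{bott vanishing surfaces} contains no ideal. The ideal cannot be dropped at the $H^1$ level: from $0 \to \Omega^1_X \otimes A \otimes I_{k-1}(D) \to \Omega^1_X \otimes A \to \Omega^1_X \otimes A|_{Z_{k-1}} \to 0$ one gets $H^0(Z_{k-1}, \cdot) \to H^1(\text{ideal-twisted}) \to H^1(\text{untwisted})$, and $H^0(Z_{k-1},\cdot) \neq 0$ whenever $Z_{k-1}\neq\emptyset$; the zero-dimensionality of $Z_{k-1}$ only kills $H^i$ for $i\ge 2$ (Lemma \ref{use Nakano and isosing}), so you would additionally need surjectivity of $H^0(X,\Omega^1_X\otimes A)\to H^0(Z_{k-1},\cdot)$, which neither Proposition \ref{bott vanishing surfaces} nor ampleness supplies. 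The paper in fact never invokes Proposition \ref{bott vanishing surfaces} in this proof: it pushes the term through the conormal sequence to $\Omega^1_{\PP^3}|_X$ and then through the Koszul resolution of Lemma \ref{resol_Omega_p}, so that every ideal-twisted term is a \emph{line bundle} twisted by $I_{k-1}(D)$; the $j\in\{2,3\}$ pieces are handled by part (1), the $j=1$ piece by the induction hypothesis applied to $L(2-d)$ --- this is exactly where the twists $L(jZ-\cD)\big((k-j+2)(2-d)\big)$ originate --- and the leftover $H^2\big(X,\O_X(-d)\otimes L((k-1)Z)\otimes I_{k-1}(D)\big)$ again by part (1), giving the $L(kZ-\cD)(4-2d)$ condition.

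Two further points. First, Saito-type vanishing (Theorem \ref{Saito_type_vanishing}) bounds hypercohomology of the whole Spencer complex, not of individual terms, so your claim in part (1) that the $H^2$ of an $\Omega^1_X$-term "collapses to a direct application of Saito vanishing" is not a valid step; it is also unnecessary, since the paper proves part (1) without any $\mathscr{D}$-module input at all: $H^{i-1}(Z_k,\cdot)\to H^i\big(X,\omega_X(kZ)\otimes I_k(D)\otimes L\big)\to H^i\big(X,\omega_X\otimes L((k+1)Z-\cD)\big)$, where the first term dies because $\dim Z_k = 0$ and $i\ge 2$, and the last by Kodaira--Nakano. Second, your framing via a globally defined $\M=\O_X(*Z)$ with its Hodge filtration, with the $\QQ$-divisor case dismissed as "routine," is backwards on a surface in $\PP^3$: since $\operatorname{Pic}(X)$ can be large, the twisted module $\O_X(*Z)h^{-\alpha}$ need not glue (one would need a root line bundle $M$ with $M^{\otimes l}\cong\O_X(lD)$), and this is precisely why the proof is run on Chen's global sheaves $\M_k(D)$ and the complex $Sp_{k-2}(\M_\bullet(D))$ rather than on $\mathrm{gr}^F\operatorname{DR}$ of a global filtered $\mathscr{D}$-module; as written, your argument literally covers only the reduced integral case.
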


\noindent Similarly, when $D$ is a reduced integral divisor, we have instead
\begin{itemize}
    \item For $i\ge 2$, if $L(kD)$ is ample, then  
        \[
            H^i\big(X, \omega_X(kD) \otimes I_k(D) \otimes L\big) = 0.
        \]
    \item For $i=1$, if $(L-D)\big((k-1)(2-d)\big) \text{ and } L(jD)((k-j+1)(2-d))$ are ample for $0 \le j \le k-1$, then 
        \[
            H^1\big(X, \omega_X(kZ) \otimes I_k(D) \otimes L\big) = 0.
        \]
\end{itemize}

\begin{remark} \label{I_0}
\begin{enumerate}
    \item Since $I_0(D) \cong \mathcal{J}((1-\epsilon)D)$, the vanishing for $I_0(D)$ is a direct consequence of the Nadel vanishing theorem which says
    \[
        H^i\big(X, \omega_X \otimes I_0(D) \otimes L) \cong H^i(X, \omega_X \otimes \mathcal{J}((1-\epsilon)D) \otimes L\big) = 0
    \] 
    if $L - (1-\epsilon) D$ is ample for any $0 < \epsilon \ll 1$ and $i\ge 1$. In particular, if $n \ge 4$, we have 
    \[
        H^i\big(X, \omega_X \otimes I_0(D) \otimes \O_X(l))\big) = 0,
    \]
    for $l \ge m$. On the other hand, if $n = 3$, we have 
    \[
        H^i\big(X, \omega_X \otimes I_0(D) \otimes L\big) = 0,
    \]
    for $L-D$ nef.  
    
    \item The case $d=1$, or equivalently $X$ is the projective space $\PP^{n-1}$, is a result due to Musta\c t\u a and Popa \cite[Variant 12.5]{Hodge_ideals_Q1}, and Chen \cite[Variant 4.5]{BingYi}. 
\end{enumerate}
\end{remark}

As an application, by utilizing the above vanishing theorems, we investigate the classical problem of finding a bound for the degree of hypersurfaces in $\PP^n$ on which a set of points imposes independent conditions. This extends a result of Musta\c t\u a and Popa, \cite[Theorem G, Corollary H]{Hodge_ideals_Q1}. 


\begin{thm} \label{bound of deg}
    Let $D$ be a complete intersection of a smooth irreducible hypersurface $X$ of degree $d \ge 2$ and another hypersurface (not necessarily smooth) of degree $m$ in $\PP^n$, with $n \ge 4$. Assume furthermore that $D$ has at worst isolated singularities.
    \begin{enumerate}
        \item For each $k$, denote by $Z_k$ the subscheme associated with the Hodge ideal $I_k(D)$, then  
        \[
            \text{length}(Z_k) \le \binom{(k+1)m +l +d - 1}{n} - \binom{(k+1)m +l - 1}{n},
        \]
        
        \noindent where  $l = \max \{k(d-2), k(d-2) + (n-2)(d-1) - m\}$. 
        \item Denote by $S_t$ the set of singular points on $D$ of multiplicity at least $t \ge 2$. Then $S_t$ imposes independent conditions on hypersurfaces of degree at least 
        \[
            (k+1)m + l + d - n - 1,
        \]
        in which $k = \lceil \frac{n}{t} \rceil -1$ and  $l = \max \{0, k(d-2), k(d-2) + (n-2)(d-1) - m\}$. 
    \end{enumerate}
\end{thm}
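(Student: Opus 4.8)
The plan is to derive both parts from the isolated--singularities vanishing in Theorem~\ref{vanishing_isolated_sing} together with the standard dictionary between cohomology vanishing and the imposition of conditions. Since $D$ is cut out on $X$ by a hypersurface of degree $m$, it is a reduced divisor with $Z=D$, $\O_X(Z)\cong\O_X(m)$ and $a=0$, so the reduced--divisor forms of parts (2)--(3) of Theorem~\ref{vanishing_isolated_sing} apply; and because $D$ has isolated singularities each $Z_k$ is zero--dimensional, whence $\mathrm{length}(Z_k)=h^0(X,\O_{Z_k})$.

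For part (1) I would tensor the structure sequence $0\to I_k(D)\to\O_X\to\O_{Z_k}\to 0$ by $\mathcal{L}:=\omega_X(kZ)\otimes\O_X(l+m)$. As $Z_k$ is finite, $\mathcal{L}\otimes\O_{Z_k}\cong\O_{Z_k}$ has no higher cohomology, and the relevant piece of the long exact sequence is
\[
    H^0(X,\mathcal{L})\longrightarrow H^0(X,\O_{Z_k})\longrightarrow H^1\big(X,\omega_X(kZ)\otimes I_k(D)\otimes\O_X(l+m)\big).
\]
The last group vanishes by Theorem~\ref{vanishing_isolated_sing}(3), because with $a=0$ one has $l+m=\max\{k(d-2)+m,\ k(d-2)+(n-2)(d-1)\}$, exactly the hypothesis there. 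Hence $\mathrm{length}(Z_k)\le h^0(X,\mathcal{L})$. By adjunction $\mathcal{L}\cong\O_X\big((k+1)m+l+d-n-1\big)$, and the exact sequence $0\to\O_{\PP^n}(e-d)\to\O_{\PP^n}(e)\to\O_X(e)\to 0$ of the hypersurface $X$ (together with $H^1(\PP^n,\O_{\PP^n}(e-d))=0$) gives $h^0(X,\O_X(e))=\binom{e+n}{n}-\binom{e-d+n}{n}$; substituting $e=(k+1)m+l+d-n-1$ produces the asserted binomial difference.

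For part (2) the extra ingredient is the multiplicity criterion of Musta\c t\u a and Popa for Hodge ideals: a point $x$ with $\mathrm{mult}_x(D)\ge t$ lies in $\mathrm{Supp}(Z_k)$ as soon as $(k+1)t>\dim X$. Here $\dim X=n-1$, so this reads $(k+1)t\ge n$, and the choice $k=\lceil n/t\rceil-1$ gives $(k+1)t=\lceil n/t\rceil\,t\ge n$; thus $S_t\subseteq\mathrm{Supp}(Z_k)$ and $\O_{Z_k}$ surjects onto $\O_{S_t}$ on global sections. Composing the surjection $H^0(X,\mathcal{L})\twoheadrightarrow H^0(X,\O_{Z_k})$ of part (1) with $H^0(X,\O_{Z_k})\twoheadrightarrow H^0(X,\O_{S_t})$ and with $H^0(\PP^n,\O_{\PP^n}(s))\twoheadrightarrow H^0(X,\O_X(s))$ (again from $H^1(\PP^n,\O_{\PP^n}(s-d))=0$), where $s=(k+1)m+l+d-n-1$, shows that $H^0(\PP^n,\O_{\PP^n}(s))\to H^0(S_t,\O_{S_t})$ is onto. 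This is exactly the statement that $S_t$ imposes independent conditions in degree $s$, and it propagates to all degrees $\ge s$ by the usual monotonicity for finite reduced subschemes (multiply by a general linear form).

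The delicate points, and where I expect the real work to sit, are twofold. First, the multiplicity criterion above is the genuine external input and must be invoked with its sharp constant; it is what pins down the value $k=\lceil n/t\rceil-1$ and the role of the hypothesis $t\ge 2$. Second, when $k=0$ (which occurs for $t\ge n$) Theorem~\ref{vanishing_isolated_sing}(3) is unavailable, and I would substitute Remark~\ref{I_0}(1): $H^1\big(X,\omega_X\otimes I_0(D)\otimes\O_X(l+m)\big)=0$ because $\O_X(l+m)\otimes\O_X(-(1-\epsilon)m)\cong\O_X(l+\epsilon m)$ is ample as soon as $l\ge 0$ --- which is guaranteed here and is recorded by the entry $0$ in the maximum defining $l$. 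Beyond correctly matching the shift $l\mapsto l+m$ to the hypotheses of Theorem~\ref{vanishing_isolated_sing} and treating this boundary case, the remaining steps are formal.
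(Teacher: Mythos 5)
Your proposal is correct and follows essentially the same route as the paper: the structure sequence $0 \to A \otimes I_k(D) \to A \to A \otimes \O_{Z_k} \to 0$ with $A = \omega_X((k+1)D)\otimes \O_X(l)$, the $H^1$-vanishing from Theorem~\ref{vanishing_isolated_sing}(3) (with $a=0$, $Z=D$) to bound $\operatorname{length}(Z_k)$ by $h^0(X,A)$ and yield the binomial difference, and for part (2) the Musta\c t\u a--Popa multiplicity bound giving $I_k(D)\subseteq I_{S_t}$ for $k=\lceil n/t\rceil-1$ composed with the surjection $H^0(\PP^n,\O_{\PP^n}(s))\twoheadrightarrow H^0(X,A)$. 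Your explicit handling of the $k=0$ boundary case via Nadel vanishing (the extra $0$ in the maximum) and of monotonicity in the degree only spells out what the paper leaves implicit.
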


The vanishing for Hodge ideals in Theorem \ref{vanishing_surface} can be used to obtain an analog of Theorem $C$ for the case $n=3$. However, the singularities of curves are well-understood with the help of multiplier ideals and adjoint ideals; hence, we will consider only the case $n \ge 4$. 

\vspace{\bigskipamount}

The main ingredient in the proofs of the above results is what we call "borderline Nakano vanishing". Recall the Nakano vanishing theorem \cite[Theorem 4.2.3]{positivity1} which says for a smooth complex variety $Y$ of dimension $n$, and an ample divisor $A$ on $Y$, we have 
\[
    H^q\big(Y, \Omega^p_Y \otimes \O_Y(A)\big) = 0 
\]
if $p+q>n$. By "borderline Nakano vanishing", we refer to the vanishings of "boundary" terms $H^q\big(Y, \Omega_Y^{n - q} \otimes \O_Y(A)\big)$. In section \ref{sec Borderline}, we will prove some borderline Nakano vanishings for hypersurfaces in $\PP^n$. If $n \ge 4$, we obtain

\begin{thm} \label{Bott vanishing hypersurfaces}
    Suppose $X$ is a smooth hypersurface in $\PP^n$ of degree $d \ge 2$ with $n \ge 4$. Then
    \begin{enumerate}
        \item If $i+p\neq n-1$, we have $H^i(X, \Omega^p_X \otimes \O_X(l))=0$ unless 
        \begin{itemize}
            \item $i=p$ and $l=0$,
            \item $i=0$ and $l>p$, or
            \item $i=n-1$ and $l<p-n+1$.
        \end{itemize}
        \item If $i+p=n-1$, then 
        \begin{itemize}
            \item For $0<i<n-1$, we have  $H^{i}\big(X,\Omega^p_X \otimes \O_X(l)\big) = 0$
            if $l \ge (p+1)d-n$ or $l\le (p-n)d + n$.
            \item  $H^0\big(X, \Omega^{n-1}_X \otimes \O_X(l)\big)=0$ if and only if $l\le n-d$.
            \item  $H^{n-1}\big(X, \O_X(l)\big)=0$ if and only if $l\ge d-n$.
        \end{itemize}
    \end{enumerate}
\end{thm}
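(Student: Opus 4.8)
The plan is to reduce every group $H^i(X,\Omega^p_X\otimes\O_X(l))$ to the cohomology of line bundles on $X$, using three inputs. First, adjunction gives $\omega_X=\Omega^{n-1}_X\cong\O_X(d-n-1)$, which disposes of the two extreme form degrees directly: $H^0(X,\Omega^{n-1}_X(l))=H^0(X,\O_X(l+d-n-1))$ is nonzero exactly when $l+d-n-1\ge 0$, giving the criterion $l\le n-d$, and the companion statement for $H^{n-1}(X,\O_X(l))$ follows by duality. Second, Serre duality on the $(n-1)$-fold $X$ yields $H^i(X,\Omega^p_X(l))\cong H^{n-1-i}(X,\Omega^{n-1-p}_X(-l))^\vee$; under the substitution $(i,p,l)\mapsto(n-1-i,\,n-1-p,\,-l)$ the borderline locus $i+p=n-1$ is preserved and the bound $l\ge(p+1)d-n$ is interchanged with $l\le(p-n)d+n$, so it suffices to establish each borderline vanishing for just one of the two ranges of $l$. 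Third, the exterior powers of the restricted Euler sequence and of the conormal sequence $0\to\O_X(-d)\to\Omega^1_{\PP^n}|_X\to\Omega^1_X\to0$ furnish the two short exact sequences I will iterate.

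I would begin with the base case $p=0$. The sequence $0\to\O_{\PP^n}(l-d)\to\O_{\PP^n}(l)\to\O_X(l)\to0$ together with the vanishing of intermediate cohomology of line bundles on $\PP^n$ shows that $H^i(X,\O_X(l))=0$ for $0<i<n-1$ and all $l$, while $H^0$ and $H^{n-1}$ reduce to spaces of global sections computed above; this already yields both ``if and only if'' assertions of part (2), and, after Serre duality, the exceptional cases $i=0,\ l>p$ and $i=n-1,\ l<p-n+1$ of part (1). Next I would compute $H^\bullet(X,\Omega^p_{\PP^n}|_X(l))$ from the restriction sequence $0\to\Omega^p_{\PP^n}(l-d)\to\Omega^p_{\PP^n}(l)\to\Omega^p_{\PP^n}|_X(l)\to0$, feeding in Bott's formula for $\PP^n$; its only nonzero contributions come from the three Bott ranges $q\in\{0,p,n\}$. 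This is exactly the weak form of Bott vanishing on $X$ promised in the abstract.

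With these in hand I would induct on $p$ via the exterior power of the conormal sequence, $0\to\Omega^{p-1}_X(l-d)\to\Omega^p_{\PP^n}|_X(l)\to\Omega^p_X(l)\to0$, whose long exact sequence sandwiches $H^i(\Omega^p_X(l))$ between $H^i(\Omega^p_{\PP^n}|_X(l))$ (now known) and $H^{i+1}(\Omega^{p-1}_X(l-d))$ (known by the inductive hypothesis at form degree $p-1$). For part (1), when $i+p\neq n-1$ both flanking groups vanish outside the listed exceptional ranges, while the diagonal class $H^p(X,\Omega^p_X)=\CC$ at $l=0$ persists as the restriction of the Bott class $H^p(\PP^n,\Omega^p_{\PP^n})=\CC$. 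For part (2) the decisive point is the arithmetic identity $l\ge(p+1)d-n\iff l-d\ge pd-n$: the flanking term $H^{i+1}(\Omega^{p-1}_X(l-d))=H^{n-1-(p-1)}(\Omega^{p-1}_X(l-d))$ is again borderline, and the threshold at degree $p$ is precisely the threshold at degree $p-1$ after twisting by $-d$, so the hypothesis propagates down the induction; the other flank $H^{n-1-p}(\Omega^p_{\PP^n}|_X(l))$ is annihilated by the weak Bott vanishing once $l\ge(p+1)d-n$.

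The hard part is the bookkeeping at the center of the borderline range. Because the resolution yields $\Omega^p_{\PP^n}|_X$ rather than $\Omega^p_X$, each step leaves a conormal remainder, and for small $d$ --- notably $d=2$ with $n$ even and $p=n/2$, where the threshold $l\ge(p+1)d-n$ is met with equality at $l=d$ --- Bott's formula does contribute a one-dimensional group $H^{n-p}(\Omega^p_{\PP^n}(l-d))=H^p(\PP^n,\Omega^p_{\PP^n})=\CC$ at the boundary $l-d=0$. One then has to verify that in the long exact sequences these boundary classes cancel against the neighbouring Hodge classes $H^p(X,\Omega^p_X)=\CC$ --- so that the connecting maps are isomorphisms and no spurious class leaks into $H^{n-1-p}(\Omega^p_X(l))$ --- rather than surviving. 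Confirming that the only middle cohomology left over is the genuine primitive Hodge cohomology at $l=0$, which the two bounds are precisely calibrated to avoid, is the step I expect to demand the most care.
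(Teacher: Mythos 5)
Your proposal is correct in outline, and through the hardest part of the theorem it takes a genuinely different route from the paper. Up to the borderline you run the same machinery: the restriction sequence plus Bott's formula on $\PP^n$ to compute $H^\bullet\big(X,\Omega^p_{\PP^n}|_X\otimes\O_X(l)\big)$ (the paper's Lemma 3.1), adjunction $\omega_X\cong\O_X(d-n-1)$ for the extreme cases, and induction through the wedge of the conormal sequence. The divergence is at $i+p=n-1$: the paper only shifts within the regions $i>p$ and $i<p$, where the flanking $\Omega^p_{\PP^n}|_X$-groups vanish for index reasons alone, so it bottoms out at the two cases $p=1$ and $p=n-2$, and the case $(i,p)=(1,n-2)$ costs it real work --- the normal/Euler bundle diagram reducing the statement to surjectivity of the Jacobian map $J_{\PP^n}(f)$, proved via Macaulay's theorem on the Jacobian ring. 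You instead use the Serre duality involution $(i,p,l)\mapsto(n-1-i,\,n-1-p,\,-l)$ to reduce to the single range $l\ge(p+1)d-n$, and then descend across the diagonal all the way to the base case $H^{n-1}\big(X,\O_X(l-pd)\big)$, whose vanishing is exactly equivalent to $l\ge(p+1)d-n$; checking the Lemma-3.1 windows confirms this descent is unobstructed for every $d\ge 3$, and the unique obstruction for $d=2$ is the one you flag ($n$ even, $p=n/2$, $l=d$, threshold met with equality), where $H^{p-1}\big(X,\Omega^p_{\PP^n}|_X(d)\big)\cong\CC$. The cancellation you postpone there is precisely the paper's one-line observation, used in its part (1): the map $H^{p-1}(X,\Omega^{p-1}_X)\to H^{p-1}\big(X,\Omega^p_{\PP^n}|_X(d)\big)$ is the restriction of the Lefschetz isomorphism $H^{p-1}(\PP^n,\Omega^{p-1}_{\PP^n})\xrightarrow{\cdot[X]}H^p(\PP^n,\Omega^p_{\PP^n})$, hence surjective, after which the residual term $H^{p}(X,\Omega^{p-1}_X)$ dies by continuing the threshold-equality descent to $H^{n-1}\big(X,\O_X(2-n)\big)=0$. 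So your route eliminates the Macaulay/Jacobian-ring input entirely, at the price of more delicate bookkeeping at threshold equality; what the paper's route buys in exchange is sharpness, since the perfect pairing in Macaulay's theorem immediately gives the non-surjectivity of $J_X(f)$ at $l=(n-1)d-n-1$, i.e.\ $H^1\big(X,\Omega^{n-2}_X((n-1)d-n-1)\big)\neq 0$ (though your descent chain, being an isomorphism onto $H^{n-1}\big(X,\O_X(d-n-1)\big)\cong\CC$ at that twist, recovers this as well).
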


When $X$ is a hypersurface of $\PP^3$ of degree $d$, the Picard group of $X$ can be large. In this case, we prove a weaker vanishing theorem

\begin{prop} \label{bott vanishing surfaces}
    Let $X$ be a smooth hypersurface in $\PP^3$ of degree $d\ge 2$. Then:
    \[
        H^1(X, \Omega_X^1 \otimes L) = 0 
    \]
    if $L(4-2d)$ is ample. 
\end{prop}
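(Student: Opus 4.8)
The plan is to exploit the two natural exact sequences relating $\Omega^1_X$ to line bundles on $X \subset \PP^3$: the conormal sequence
\[
0 \to \O_X(-d) \to \Omega^1_{\PP^3}|_X \to \Omega^1_X \to 0,
\]
and the restriction to $X$ of the Euler sequence
\[
0 \to \Omega^1_{\PP^3}|_X \to \O_X(-1)^{\oplus 4} \to \O_X \to 0.
\]
Throughout I use adjunction, $\omega_X \cong \O_X(d-4)$, so that the hypothesis ``$L(4-2d)$ ample'' reads precisely as ``$L(-d) \otimes \omega_X^{-1}$ ample''. Twisting these sequences by $L$ and chasing the long exact cohomology sequences will reduce $H^1(X, \Omega^1_X \otimes L) = 0$ to a few line-bundle vanishings, accessible through Kodaira vanishing and Serre duality, together with the surjectivity of one multiplication map.

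First I would tensor the conormal sequence by $L$ and extract the piece
\[
H^1(X, \Omega^1_{\PP^3}|_X \otimes L) \to H^1(X, \Omega^1_X \otimes L) \to H^2(X, L(-d)).
\]
Since $L(-d) \otimes \omega_X^{-1} \cong L(4-2d)$ is ample, Kodaira vanishing gives $H^2(X, L(-d)) = 0$, so it suffices to prove $H^1(X, \Omega^1_{\PP^3}|_X \otimes L) = 0$. Twisting the Euler sequence by $L$, and noting $L(-1) \otimes \omega_X^{-1} \cong L(3-d) \cong L(4-2d) \otimes \O_X(d-1)$ is ample, Kodaira gives $H^1(X, L(-1)) = 0$, so the long exact sequence collapses to
\[
H^0(X, L(-1))^{\oplus 4} \xrightarrow{\ \alpha\ } H^0(X, L) \to H^1(X, \Omega^1_{\PP^3}|_X \otimes L) \to 0.
\]
Everything then reduces to the surjectivity of $\alpha$. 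Unwinding the Euler map $(s_i) \mapsto \sum_i x_i s_i$ and recalling that $x_0, \dots, x_3$ span $H^0(X, \O_X(1))$ (as $d \ge 2$), this is exactly the multiplication map $H^0(X, \O_X(1)) \otimes H^0(X, L(-1)) \to H^0(X, L)$.

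To prove this surjectivity I would invoke Castelnuovo--Mumford regularity with respect to the very ample $\O_X(1)$: it suffices that $L(-1)$ be $0$-regular, i.e. that $H^1(X, L(-2)) = 0$ and $H^2(X, L(-3)) = 0$, since Mumford's theorem then yields surjectivity of $H^0(L(-1)) \otimes H^0(\O_X(1)) \to H^0(L)$. The first vanishing is again Kodaira, because $L(-2) \otimes \omega_X^{-1} \cong L(2-d) \cong L(4-2d) \otimes \O_X(d-2)$ is ample for $d \ge 2$. The second is Serre-dual to $H^0(X, L^{-1}(d-1)) = 0$, which holds as soon as $L(1-d) \cong L(4-2d) \otimes \O_X(d-3)$ is ample.

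I expect this last point to be the main obstacle, and it is exactly where the cases $d \ge 3$ and $d = 2$ diverge. For $d \ge 3$ the twist $\O_X(d-3)$ is nef, so $L(1-d)$ is ample and the regularity argument goes through. For $d = 2$ the hypothesis only says $L$ is ample and this step breaks; here $X$ is a smooth quadric, so $X \cong \PP^1 \times \PP^1$ with $\O_X(1) = \O(1,1)$ and $\Omega^1_X \cong \O(-2,0) \oplus \O(0,-2)$. In that case I would instead compute $H^1(X, \Omega^1_X \otimes L)$ directly: for $L = \O(a,b)$ ample (i.e. $a, b \ge 1$) the Künneth formula gives $H^1(X, \O(a-2,b)) = H^1(X, \O(a,b-2)) = 0$, since $a-2, b-2 \ge -1$. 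Combining the two ranges of $d$ completes the proof.
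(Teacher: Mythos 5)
Your proposal is correct, and for every $d \ge 3$ it takes a genuinely different route from the paper's. The paper argues in three cases: $d=2$ (the quadric $\PP^1\times\PP^1$, handled essentially as you do, via the splitting of $\Omega^1_X$ into the two pullbacks and a K\"unneth-type computation), $d=3$ (the cubic surface, analyzed as a blow-up $\pi:X\to\PP^2$ at six points via $0 \to \pi^*\Omega^1_{\PP^2} \to \Omega^1_X \to i_*\omega_{E/\PP^2}\to 0$), and $d\ge 4$, where after the same conormal-sequence reduction to $H^1\big(X,\Omega^1_{\PP^3}|_X\otimes L\big)$ it climbs \emph{up} in degree using the wedge sequence $0\to\Omega^2_{\PP^3}|_X\to\bigwedge^2 V\otimes\O_X(-2)\to\Omega^1_{\PP^3}|_X\to 0$, reducing to $H^2\big(X,\Omega^2_{\PP^3}|_X\otimes L\big)$, which is killed by Nakano/Kodaira applied to $\Omega^1_X\otimes L(-d)$ and $\omega_X\otimes L$; the step requiring $L(-d)$ ample is precisely why the paper needs $d\ge 4$ there and must treat the cubic by separate geometry. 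You instead go \emph{down} the Euler sequence: since $H^1\big(X,L(-1)\big)=0$, you identify $H^1\big(X,\Omega^1_{\PP^3}|_X\otimes L\big)$ with the cokernel of the multiplication map $H^0\big(X,\O_X(1)\big)\otimes H^0\big(X,L(-1)\big)\to H^0\big(X,L\big)$ and prove surjectivity by $0$-regularity of $L(-1)$, whose two required vanishings $H^1\big(X,L(-2)\big)=0$ and $H^2\big(X,L(-3)\big)=0$ you verify correctly by Kodaira and by Serre duality (the latter dualizes to $H^0\big(X,L^{-1}(d-1)\big)=0$, needing $L(1-d)=L(4-2d)\otimes\O_X(d-3)$ ample). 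That is exactly where $d\ge 3$ enters, and you rightly flag that $d=2$ genuinely fails at this step --- e.g.\ $L=\O_X(1,1)$ on the quadric gives $L^{-1}(1)\cong\O_X$, with nonzero sections --- so your separate K\"unneth computation for the quadric is necessary, and it is correct. What your route buys is uniformity: a single Castelnuovo--Mumford argument covers all $d\ge 3$ and entirely avoids the blow-up analysis of the cubic surface; what the paper's route buys is elementarity, using only Bott/Nakano-type vanishing and exact sequences at the cost of a three-way case split.
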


\begin{remark}
    In section \ref{sec Borderline}, by using Macaulay's theorem \cite[Theorem 7.4.1]{periodmappinganddomain}, we show that the partial derivatives of the polynomial defining $X$ generate all polynomials of degree at least $(n+1)(d-2)+1$. This in turn gives
    \[H^{n-2}\big(X, \Omega_X^1\otimes \O_X((p+1)d-n-1)\big) \neq 0\]
    which says that our bound is where the first non-vanishing appears.
\end{remark}
\vspace{\bigskipamount}

\noindent \textbf{Acknowledgements.}
I would like to express my sincere gratitude to my advisor Mihnea Popa for suggesting the problem, for helpful discussions, and for his constant support during the preparation of this paper. I thank Wanchun Shen and Jit Wu Yap for their useful comments on the draft of this paper.

\vspace{\bigskipamount}

\section{Preliminaries}

\subsection{Operations on exact sequences}

We first discuss the general theory behind the construction of a Koszul-type resolution that will be used throughout the proof of our main results. In what follows, by a \textit{variety} we mean a reduced separated scheme of finite type over $\CC$. Consider an exact sequence of vector bundles on a variety $X$:
\[
    0 \rightarrow A \rightarrow B \rightarrow C \rightarrow 0.
\]
If $C$ is a line bundle, it is well-known that we can form the following short exact sequence for any positive integer $p$: 
\begin{equation} \label{wedge_seq}
    0 \rightarrow \bigwedge^pA \rightarrow \bigwedge^pB \rightarrow \bigwedge^{p-1}A \otimes C \rightarrow 0.
\end{equation}
Dually, if $A$ is a line bundle, we get instead: 
\begin{equation} \label{wedge_seq_2}
    0 \rightarrow A \otimes \bigwedge^{p-1}C \rightarrow \bigwedge^pB \rightarrow \bigwedge^pC \rightarrow 0.
\end{equation}

When $X$ is a smooth hypersurface of $\PP^n$, we apply the above operations to the restriction of the Euler sequence on $\PP^n$ to $X$
\[
    0 \rightarrow \Omega^1_{\PP^n}|_X \rightarrow \bigoplus_{n+1} \O_X(-1) \rightarrow \O_X \rightarrow 0
\]
to obtain a resolution for $\Omega^p_{\PP^n}|_X$:

\begin{sublem} \label{resol_Omega_p}
    Let $X$ be a smooth hypersurface of $\PP^n$, then $\Omega^p_{\PP^n}|_X$ admits the following Koszul-type resolution:
    \[
        0 \rightarrow \O_X(-n-1) \rightarrow \bigoplus_{n+1} \O_X(-n) \rightarrow \dots \rightarrow \bigoplus_{\binom{n+1}{p+1}} \O_X(-p-1) \rightarrow \Omega^p_{\PP^n}|_X \rightarrow 0.
    \]
    which is the restriction of the familiar resolution of $\Omega_{\PP^n}^p$ on $\PP^n$ to $X$. 
\end{sublem}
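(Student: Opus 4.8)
The plan is to obtain the resolution by splicing together short exact sequences produced from the restricted Euler sequence via the wedge operation \eqref{wedge_seq}. Set $A = \Omega^1_{\PP^n}|_X$, $B = \bigoplus_{n+1}\O_X(-1)$, and $C = \O_X$, so that the restricted Euler sequence reads $0 \to A \to B \to C \to 0$ with $C$ a line bundle, exactly the hypothesis of \eqref{wedge_seq}. Applying \eqref{wedge_seq} for each integer $j$ with $1 \le j \le n$ then gives a short exact sequence
\[
    0 \rightarrow \bigwedge^j A \rightarrow \bigwedge^j B \rightarrow \bigwedge^{j-1} A \otimes C \rightarrow 0.
\]

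First I would identify the three terms. Since restriction to $X$ is the pullback $i^*$ along the closed immersion $i\colon X \hookrightarrow \PP^n$, and $i^*$ commutes with exterior powers of locally free sheaves, we have $\bigwedge^j A = \bigwedge^j\!\big(i^*\Omega^1_{\PP^n}\big) = i^*\!\bigwedge^j \Omega^1_{\PP^n} = \Omega^j_{\PP^n}|_X$, and likewise $\bigwedge^{j-1}A \otimes C = \Omega^{j-1}_{\PP^n}|_X$ because $C = \O_X$. For the middle term, the exterior power of a direct sum of line bundles decomposes as $\bigwedge^j\!\big(\bigoplus_{n+1}\O_X(-1)\big) = \bigoplus_{\binom{n+1}{j}}\O_X(-j)$, since every $j$-element subset contributes $\O_X(-1)^{\otimes j} = \O_X(-j)$. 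Thus each sequence takes the form
\[
    0 \rightarrow \Omega^j_{\PP^n}|_X \rightarrow \bigoplus_{\binom{n+1}{j}}\O_X(-j) \rightarrow \Omega^{j-1}_{\PP^n}|_X \rightarrow 0.
\]

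Next I would splice these sequences for $j = p+1, p+2, \dots, n$. Writing $K_j := \Omega^j_{\PP^n}|_X$ and $E_j := \bigoplus_{\binom{n+1}{j}}\O_X(-j)$, each sequence exhibits $K_{j-1}$ both as the image of the surjection $E_j \twoheadrightarrow K_{j-1}$ and as the subsheaf $K_{j-1} \hookrightarrow E_{j-1}$. Composing $E_j \twoheadrightarrow K_{j-1} \hookrightarrow E_{j-1}$ produces a complex $E_n \to E_{n-1} \to \cdots \to E_{p+1} \to K_p \to 0$ whose exactness at each interior spot is immediate: one has $\ker(E_j \to E_{j-1}) = \ker(E_j \twoheadrightarrow K_{j-1}) = K_j = \operatorname{im}(E_{j+1} \to E_j)$, where the middle equality uses the $j$-th sequence and the last uses surjectivity of $E_{j+1} \twoheadrightarrow K_j$. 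At the top I use $\Omega^n_{\PP^n} = \omega_{\PP^n} = \O_{\PP^n}(-n-1)$, so $K_n = \Omega^n_{\PP^n}|_X = \O_X(-n-1)$ and $E_n = \bigoplus_{\binom{n+1}{n}}\O_X(-n) = \bigoplus_{n+1}\O_X(-n)$, and the leftmost map $K_n \hookrightarrow E_n$ is the injection from the $j=n$ sequence; at the bottom, the map $E_{p+1} \twoheadrightarrow K_p$ is the surjection from the $j=p+1$ sequence. This yields exactly the asserted resolution
\[
    0 \rightarrow \O_X(-n-1) \rightarrow \bigoplus_{n+1}\O_X(-n) \rightarrow \dots \rightarrow \bigoplus_{\binom{n+1}{p+1}}\O_X(-p-1) \rightarrow \Omega^p_{\PP^n}|_X \rightarrow 0.
\]

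The argument is essentially formal once the terms are identified, so there is no serious analytic obstacle; the only points requiring care are the compatibility of exterior powers with restriction (which holds because $i^*$ is a symmetric monoidal, right-exact functor that preserves local freeness) and the bookkeeping in the splice, making sure the twists and binomial coefficients line up. The termination of the resolution on the left is guaranteed by the fact that $\Omega^n_{\PP^n}|_X$ is already the line bundle $\O_X(-n-1)$, so no further splicing beyond $j = n$ is needed.
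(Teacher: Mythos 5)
Your proof is correct and follows exactly the route the paper intends: the lemma is obtained by applying the wedge operation \eqref{wedge_seq} to the restricted Euler sequence and splicing the resulting short exact sequences, terminating with $\Omega^n_{\PP^n}|_X \cong \O_X(-n-1)$. Your careful verification of the identifications ($i^*$ commuting with exterior powers, $\bigwedge^j$ of a sum of line bundles) and of exactness at each spot of the splice fills in precisely the routine details the paper leaves implicit.
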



\subsection{Hodge ideals}

In this section, we briefly recall the definition and some properties of Hodge ideals. Let $X$ be a smooth complex variety of dimension $n$. We denote by $\mathscr{D}_X$ the sheaf of differential operators on $X$. A filtered left $\mathscr{D}_X$-module $(\M, F)$ is a left $\mathscr{D}_X$-module $\M$ equipped with an increasing filtration $F_{\bullet}\M$ by coherent $\O_X$-modules, bounded below and satisfying 
\[
    F_k \mathscr{D}_X . F_l \M \subseteq F_{k+l} \M,
\]
where $F_k \mathscr{D}_X$ is the sheaf of differential operators of order $\le k$. To a filtered left $\mathscr{D}_X$-module $(\M, F)$, one can associate the de Rham complex
\[
    \operatorname{DR}(\M,F) = [0 \rightarrow \M
    \rightarrow \Omega_X^1 \otimes \M
    \rightarrow \dots \rightarrow \omega_X \otimes \M]
\]
with the filtration
\[
    F_{k}\operatorname{DR}(\M,F) = [0 \rightarrow F_k\M
    \rightarrow \Omega_X^1 \otimes F_{k+1}\M
    \rightarrow \dots \rightarrow \omega_X \otimes F_{k+n}\M]
\]
placed in degree $-n,\dots, 0$. The associated graded complex is 
\[
    \operatorname{gr}^F_k\operatorname{DR}(\M,F) = [0 \rightarrow \operatorname{gr}^F_k\M
    \rightarrow \Omega_X^1 \otimes \operatorname{gr}^F_{k+1}\M
    \rightarrow \dots \rightarrow \omega_X \otimes \operatorname{gr}^F_{k+n}\M].
\]
There is a standard correspondence between filtered left and right $\mathscr{D}$-modules given by 
\[
    \M \mapsto \M \otimes_{\O_X} \omega_X = \M^r 
\]
with the corresponding filtration $F_{p-n}(\M^r) \cong F_p\M \otimes \omega_X$. The associated de Rham complex can be written in terms of the filtered right $\mathscr{D}_X$-module $(\M^r, F)$ as
\[
    \operatorname{gr}^F_k\operatorname{DR}(\M^r,F) = [0 \rightarrow \operatorname \bigwedge^{n} T_X \otimes {gr}^F_{k-n} \M^r
    \rightarrow \bigwedge^{n-1} T_X \otimes \operatorname{gr}^F_{k-n+1}\M^r
    \rightarrow \dots \rightarrow \operatorname{gr}^F_{k}\M^r].
\]

\vspace{5mm}

For a reduced effective divisor $D$ on $X$, the $\mathscr{D}_X$-module $\O_X(*D) \coloneqq \bigcup_{k\ge 0} \O_X(kD)$ underlies the mixed Hodge module $j_{*}\mathbb{Q}_U^{H}[n]$ \cite[Proposition 2.11]{Saito_MHM}, where $j:U\coloneqq X\setminus D \hookrightarrow X$, and hence is equipped with the Hodge filtration $F$. Moreover, $\O_X(*D)$ has the natural pole order filtration on it. Musta\c t\u a and Popa \cite{Hodge_ideals} introduce the notion of Hodge ideals $I_k(D)$ to measure the discrepancy between these two filtrations $F_k(\O_X(*D)) = I_k(D) \otimes \O_X((k+1)D)$. In \cite{Hodge_ideals_Q1} and \cite{Hodge_ideals_Q2}, the authors extend the definition of Hodge ideals to an arbitrary effective $\QQ$-divisor $D$ on $X$. Locally, we can assume that $D=\alpha \cdot \text{div}(h)$ for some nonzero regular function $h$ and some positive rational number $\alpha$. Denote by $Z$ the support of $D$. Consider the left $\mathscr{D}_X$-module $\O_X(*Z)h^{-\alpha}$, the rank $1$ free $\O_X(*Z)$-module with generator the symbol $h^{-\alpha}$, on which a derivation $D$ of $\O_X$ acts by 
\[
    D(wh^{-\alpha}) = \big ( D(w) - \alpha w\dfrac{D(h)}{h}\big)h^{-\alpha}.
\]
Although $O_X(*Z)h^{-\alpha}$ does not necessarily underlie a mixed Hodge module, by \cite[Lemma 2.6]{Hodge_ideals_Q1}, it is a direct summand of a filtered $\mathscr{D}_X$-module underlying a mixed Hodge module. This induces a canonical Hodge filtration on $\O_X(*Z)h^{-\alpha}$, which makes it a filtered $\mathscr{D}_X$-module. The \emph{Hodge ideals} $I_k(D)$ are then defined as 
\[
    F_k(\O_X(*Z)h^{-\alpha}) = I_k(D) \otimes_{\O_X} \O_X(kZ)h^{-\alpha}.
\]
This definition is, in fact, independent of the choice of $h$ and $\alpha$, and hence the Hodge ideals can be defined globally on $X$.

\vspace{\medskipamount} 

The Hodge ideals play an important role in the study of the singularities of the divisor $D$. For example, it is not difficult to see that $I_0(X) = \mathcal{J}((1-\epsilon)D)$, the multiplier ideal associated to the $\mathbb{Q}$-divisor $(1-\epsilon)D$ for any $0 < \epsilon \ll 1$ \cite[Proposition 9.1]{Hodge_ideals_Q1}. From the properties of multiplier ideals \cite[Definition 9.3.9]{positivity2}, the pair $(X, D)$ is log canonical if and only if $I_0(D)$ is trivial. If we go one step further, in the case $D$ is integral, it can be shown that the Hodge ideal $I_1(D)$ is contained in the adjoint ideal $\operatorname{adj}(D)$, see \cite[Theorem C]{Hodge_ideals}. Hence, by \cite[Theorem 9.3.48]{positivity2}, $D$ is normal and has at worst rational singularities if $I_1(D)$ is trivial. Furthermore, Musta\c t\u a and Popa prove that if $D$ is of the form $D=\alpha Z$, where $Z$ is a reduced divisor and $0 < \alpha \le 1$, then $Z$ is smooth if and only if all Hodge ideals $I_k(D)$ are trivial \cite[Corollary 11.12]{Hodge_ideals_Q1}. Thus, the Hodge ideals of $D = \alpha Z$ are cosupported inside the singular locus of $Z$ where the cosupport of an ideal $I$ is the support of $\O_X/I$. We denote by $Z_k$ the closed subscheme of $X$ defined by the ideal sheaf $I_k(D)$. 

\vspace{\medskipamount}

We are interested in a vanishing theorem for Hodge ideals. A combination of \cite[Theorem F]{Hodge_ideals} (in the case $D$ is a reduced divisor), \cite[Theorem 12.1]{Hodge_ideals_Q1} and \cite[Theorem 1.1]{BingYi} gives the following theorem: 
\begin{subthm}
    Let $X$ be a smooth projective variety of dimension $n$, $D$ an effective divisor, and $L$ a line bundle on $X$ such that $L(-D)$ is ample. For some $k \ge 1$, assume that the pair $(X,D)$ is reduced $(k-1)$-log canonical i.e $I_j(D) = \O_X(Z-\cD)$ for all $0 \le j \le k-1$. Then we have:
\begin{enumerate}
    \item If $k \le n$, and $L(pZ-\cD)$ is ample for all $1 \le p \le k$, then 
    \[
        H^i\big(X, \omega_X(kZ) \otimes I_k(D) \otimes L\big) = 0
    \]
    for all $i \ge 2$. Moreover,  
    \[
        H^1\big(X, \omega_X(kZ) \otimes I_k(D) \otimes L\big) = 0 
    \]
    holds if $H^j\big(X, \Omega^{n-j} \otimes L((k-j+1)Z-\cD)\big) = 0$ for all $1 \le j \le k$.
    
    \item If $k \ge n+1$, then $Z$ is smooth, and so $I_k(D) = \O_X(Z-\cD)$. In this case, if $L(kZ-\cD)$ is ample, then 
    \[
        H^i\big(X, \omega_X(kZ) \otimes I_k(D) \otimes L\big) = 0
    \]
    for all $i \ge 1$
    
    \item If $D$ is ample, then $(1)$ and $(2)$ also hold with $L(-D)$ nef. 
\end{enumerate}
\end{subthm}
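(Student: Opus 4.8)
The engine of the proof is Saito's vanishing theorem for mixed Hodge modules, applied to the filtered $\mathscr{D}_X$-module $\M = \O_X(*Z)h^{-\alpha}$ that defines the Hodge ideals. As recalled above, $\M$ is a direct summand of a filtered $\mathscr{D}_X$-module underlying a mixed Hodge module; since passing to a direct summand commutes with cohomology, Saito's vanishing transfers to $\M$ and gives, for every ample line bundle $A$,
\[
    \mathbb{H}^i\big(X, \operatorname{gr}^F_p\operatorname{DR}(\M)\otimes A\big)=0 \quad\text{for all } i>0 \text{ and all } p.
\]
The plan is to feed this into the \emph{filtered} de Rham complex $C^\bullet \coloneqq F_{k-n}\operatorname{DR}(\M)\otimes L$, whose top term (in degree $0$) is exactly $\omega_X\otimes F_k\M\otimes L = \omega_X(kZ)\otimes I_k(D)\otimes L$ (up to the formal twist by $h^{-\alpha}$), the sheaf whose cohomology we want to kill. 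Because $F_{k-n}\operatorname{DR}(\M)$ carries a finite filtration with successive quotients the graded complexes $\operatorname{gr}^F_j\operatorname{DR}(\M)$ for $j\le k-n$, and Saito's vanishing kills the positive hypercohomology of each graded piece, a standard extension argument yields $\mathbb{H}^i(X, C^\bullet)=0$ for all $i>0$ under the standing hypothesis that $L-D$ is ample.

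The decisive step is to identify the remaining terms of $C^\bullet$ using the hypothesis that $(X,D)$ is reduced $(k-1)$-log canonical, i.e. $I_j(D)=\O_X(Z-\cD)$ for $0\le j\le k-1$. This forces $F_j\M=\O_X(Z-\cD)\otimes\O_X(jZ)h^{-\alpha}$ to be a line bundle for every $j\le k-1$, so that the term of $C^\bullet$ in degree $-j$ is the genuine twisted sheaf of differential forms
\[
    \Omega^{n-j}_X\otimes L\big((k-j+1)Z-\cD\big), \qquad 1\le j\le k,
\]
while the terms in degree $<-k$ vanish (here the constraint $k\le n$ is what keeps $n-j\ge 0$). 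Thus $C^\bullet$ is concentrated in degrees $-k,\dots,0$, with the sought-after sheaf at the top and explicit twisted differential forms below it.

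With the terms pinned down I would run the hypercohomology spectral sequence $E_1^{p,q}=H^q(X,C^p)\Rightarrow\mathbb{H}^{p+q}(X,C^\bullet)$. The target group $H^i\big(X,\omega_X(kZ)\otimes I_k(D)\otimes L\big)$ occupies the corner $E_1^{0,i}$; it has no outgoing differentials, and since $\mathbb{H}^i(X,C^\bullet)=0$ for $i>0$ it must be exhausted by the images of the incoming differentials from $H^{i+r-1}(X,C^{-r})=H^{i+r-1}\big(X,\Omega^{n-r}_X\otimes L((k-r+1)Z-\cD)\big)$ for $r\ge 1$. For $i\ge 2$ the ampleness of each $L(pZ-\cD)$ with $1\le p\le k$ together with Nakano vanishing (note $(n-r)+(i+r-1)=n+i-1>n$) annihilates all these groups, which proves the first assertion of part $(1)$. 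For $i=1$ the relevant incoming groups are precisely $H^{r}\big(X,\Omega^{n-r}_X\otimes L((k-r+1)Z-\cD)\big)$ for $1\le r\le k$, whose vanishing is exactly the extra hypothesis, so the $H^1$ statement follows as well.

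Parts $(2)$ and $(3)$ are comparatively soft. When $k\ge n+1$ the triviality criterion \cite[Corollary 11.12]{Hodge_ideals_Q1} forces $Z$ to be smooth, so $I_k(D)=\O_X(Z-\cD)$ is a line bundle and the target sheaf becomes $\omega_X$ twisted by an ample bundle arising from $L(kZ-\cD)$; the full vanishing for all $i\ge1$ is then immediate from Kodaira--Nakano vanishing, with no spectral sequence needed. For part $(3)$, when $D$ itself is ample one relaxes ``ample'' to ``nef'' by the usual perturbation: replacing the relevant bundles by $L-(1-\varepsilon)D$ restores strict ampleness for $0<\varepsilon\ll1$, the finitely many cohomology groups in play stabilize, and one passes to the limit $\varepsilon\to 0$. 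The main obstacle, and the place where the real work sits, is the identification in the second step: one must verify that in the log canonical range the Hodge filtration is genuinely the line bundle $\O_X(Z-\cD)\otimes\O_X(jZ)h^{-\alpha}$ and that the de Rham differentials then turn $C^\bullet$ into the twisted truncated de Rham complex with terms $\Omega^{n-j}_X\otimes L((k-j+1)Z-\cD)$. This rests on the strictness of the Hodge filtration and on the explicit behaviour of $F_\bullet\M$ established in \cite{Hodge_ideals_Q1}; once it is in place, the rest is the formal spectral-sequence bookkeeping above.
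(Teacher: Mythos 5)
First, a point of calibration: the paper itself gives no proof of this statement --- it is presented as a combination of \cite[Theorem F]{Hodge_ideals}, \cite[Theorem 12.1]{Hodge_ideals_Q1} and \cite[Theorem 1.1]{BingYi} --- so your attempt should be measured against those sources. Measured that way, your skeleton is exactly the Musta\c t\u a--Popa argument, and for a \emph{reduced} divisor $D$ it is essentially complete: $\O_X(*D)$ genuinely underlies the mixed Hodge module $j_*\QQ^H_U[n]$, the filtered piece $F_{k-n}\operatorname{DR}$ is a global complex whose degree $-j$ term is $\Omega^{n-j}_X\otimes F_{k-j}\M$, the $(k-1)$-log-canonicity identifies these as the twisted bundles you wrote down, Saito vanishing on the graded quotients kills $\mathbb{H}^{>0}$ of the filtered complex, and the corner argument in the hypercohomology spectral sequence (no outgoing differentials from $E_1^{0,i}$, incoming differentials from subquotients of $H^{i+r-1}(X,C^{-r})$, killed by Akizuki--Nakano since $(n-r)+(i+r-1)>n$ when $i\ge 2$) is precisely the published proof.

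The genuine gap is in the generality actually quoted. For a $\QQ$-divisor, the module $\M=\O_X(*Z)h^{-\alpha}$ is only \emph{locally} defined --- the paper states this explicitly just before Theorem \ref{Saito_type_vanishing}: a global filtered $\mathscr{D}_X$-module locally isomorphic to $(\O_X(*Z)h^{-\alpha},F)$ exists only under the extra assumption that some line bundle $M$ satisfies $M^{\otimes l}\cong\O_X(lD)$. So your opening move, ``Saito's vanishing transfers to $\M$,'' and the global complex $C^\bullet=F_{k-n}\operatorname{DR}(\M)\otimes L$ are not available in general; as written, your argument proves \cite[Theorem F]{Hodge_ideals} and the $\QQ$-case \emph{with} the root assumption, but not Chen's strengthening \cite[Theorem 1.1]{BingYi} that the combined statement includes. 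The repair is exactly what the paper recalls from \cite{BingYi} and uses in its own proofs: replace the filtered complex by the globally defined graded sheaves $\M_k(D)$ and their Spencer complexes $Sp_k(\M_\bullet(D))$, for which Theorem \ref{Saito_type_vanishing} gives the hypercohomology vanishing, and recover the filtered statement by induction on $k$ via
\[
0\rightarrow \omega_X((k-1)Z)\otimes I_{k-1}(D)\otimes L \rightarrow \omega_X(kZ)\otimes I_k(D)\otimes L \rightarrow \M_k(D)\otimes L \rightarrow 0.
\]
Two smaller glosses: in part $(2)$ the target sheaf is $\omega_X\otimes L((k+1)Z-\cD)$ while only $L(kZ-\cD)$ is assumed ample, so bare Kodaira does not apply in the $\QQ$-divisor case (ample plus effective need not be ample); one instead uses smoothness of $Z$ and the sequence $0\rightarrow \omega_X\otimes L(kZ-\cD)\rightarrow \omega_X\otimes L((k+1)Z-\cD)\rightarrow \omega_Z\otimes L(kZ-\cD)|_Z\rightarrow 0$ together with Kodaira on $X$ and on $Z$ --- in the reduced case $Z=\cD$ and your claim is fine as stated. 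And in part $(3)$ no limit argument is needed: $L-(1-\varepsilon)D=(L-D)+\varepsilon D$ is nef plus ample, hence ample, and this case is already built into Theorem \ref{Saito_type_vanishing}(2).
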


Unlike the Hodge ideals, the module $\O_X(*Z)h^{-\alpha}$ cannot be defined globally. To guarantee the existence of a global filtered $\mathscr{D}_X$-module, which is locally isomorphic to $(\O_X(*Z)h^{-\alpha},F)$, we need the extra assumption that there exists a line bundle $M$ such that $M^{\otimes l}\cong \O_X(lD)$ as integral divisors. In \cite{BingYi}, Chen shows that we can remove this global assumption by constructing a global sheaf
\[
    \M_k(D) \coloneqq \dfrac{\omega_X(kZ)\otimes_{\O_X} I_k(D)}{\omega_X((k-1)Z) \otimes_{\O_X} I_{k-1}(D)},
\]
which is locally isomorphic to $\omega_X \otimes gr^F_k(\O_X(*Z)h^{-\alpha})$ for any $k\in \ZZ$. The $k$-th Spencer complex of $\M_\bullet(D)$
\[
    Sp_k(\M_\bullet(D)) \coloneqq [\M_k(D) \otimes_{\O_X} \bigwedge^n T_X \rightarrow \M_{k+1}(D) \otimes_{\O_X} \bigwedge^{n-1}T_X \rightarrow \dots \rightarrow \M_{k+n}(D)]
\]
placed in degree $-n,\dots,0$ is locally isomorphic to $gr^F_kDR(\O_X(*Z)h^{-\alpha})$. This complex enjoys a similar vanishing theorem as Saito's vanishing theorem for mixed Hodge modules:

\begin{subthm} (\cite[Theorem 1.2]{BingYi}) \label{Saito_type_vanishing}
    Let $X$ be a smooth complex projective variety of dimension $n$, $D$ an effective $\QQ$-divisor on $X$, and $L$ a line bundle on $X$. Then we have: 
    \begin{enumerate}
        \item If $L-D$ is ample, then 
        \[
            \HH^i\big(X, Sp_k(\M_\bullet(D)) \otimes_{\O_X} L\big) = 0
        \]
        for any $i>0$ and any integer $k$.
        
        \item If $D$ is ample, then $(1)$ also holds for $L-D$ nef. 
    \end{enumerate}
\end{subthm}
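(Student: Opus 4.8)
The plan is to deduce the vanishing from Saito's vanishing theorem for mixed Hodge modules, exploiting the fact recorded above that the Spencer complex $Sp_k(\M_\bullet(D))$ is locally isomorphic to the graded de Rham complex $\operatorname{gr}^F_k\operatorname{DR}(\O_X(*Z)h^{-\alpha})$. Recall that Saito's theorem asserts, for a mixed Hodge module on a smooth projective variety $X$ with underlying filtered left $\mathscr{D}_X$-module $(\M,F)$ and an ample line bundle $A$, that $\HH^i(X,\operatorname{gr}^F_k\operatorname{DR}(\M)\otimes_{\O_X}A)=0$ for all $i>0$ and all $k$. Two points separate this from the statement to be proved: first, $\O_X(*Z)h^{-\alpha}$ is only locally (a summand of) a mixed Hodge module, so the globally defined complex $Sp_k(\M_\bullet(D))$ must be realized globally as a graded de Rham complex; and second, the positivity hypothesis must be weakened from ``$L$ ample'' to ``$L-D$ ample''. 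I would isolate these two issues and treat them in turn.

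First I would dispatch the positivity improvement under the assumption that a global mixed Hodge module is available, i.e.\ when there is a line bundle $M$ with $M^{\otimes l}\cong\O_X(lD)$, so that $(\O_X(*Z)h^{-\alpha},F)$ is globally the relevant isotypic summand of a mixed Hodge module. Here Saito's theorem gives the vanishing for every ample $A$, and to trade ``$A$ ample'' for ``$L-D$ ample'' I would use the $\mathscr{D}_X$-linear isomorphism $\O_X(*Z)\cong\O_X(*Z)\otimes_{\O_X}\O_X(D)$, reflecting that adjoining the effective divisor $D$ does not change the module of functions with arbitrary poles along $Z$. Tracking the Hodge filtration, this identifies the graded de Rham complexes at consecutive filtration levels up to a twist by $\O_X(D)$, so that an ample twist $A$ at one level corresponds to a twist by $L=A\otimes\O_X(D)$ at the adjacent level; since then $L-D\equiv A$ is ample, this is exactly part (1) in the presence of the global assumption, recovering the Musta\c t\u a--Popa vanishing \cite{Hodge_ideals_Q1}.

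Next I would remove the global hypothesis by reduction to a cyclic cover. Choosing $l$ divisible enough and a section of $\O_X(lD)$ with zero divisor $lD$, I would form the associated degree-$l$ cyclic covering $f:Y\to X$, resolved so that $Y$ is smooth projective; on $Y$ the tautological $l$-th root of $f^*\O_X(lD)$ supplies the global line bundle required above. The Galois group $\mu_l$ acts, decomposing $f_*$ of the Spencer complex of $f^*D$ on $Y$ into isotypic summands, one of which is canonically $Sp_k(\M_\bullet(D))$; this reflects the eigensheaf description of $\O_X(*Z)h^{-\alpha}$ and is compatible with the gluing that defines $\M_\bullet(D)$. Because $f$ is finite, $f_*$ is exact and the projection formula identifies $\HH^i(X,Sp_k(\M_\bullet(D))\otimes L)$ with a direct summand of $\HH^i(Y,Sp_k(\M_\bullet(f^*D))\otimes f^*L)$. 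Since $f^*(L-D)$ is ample whenever $L-D$ is, the latter vanishes by the global case, and hence so does the summand, proving part (1). For part (2), where $D$ is ample and $L-D$ is only nef, I would invoke the relaxation of the positivity hypothesis from ample to nef that is available precisely because $D$ is ample: $L-D+\epsilon D$ is ample for every $\epsilon>0$, and Saito's vanishing upgrades to nef twists exactly as in the integral-divisor arguments of Musta\c t\u a and Popa \cite{Hodge_ideals_Q1}, with the cyclic-cover reduction transporting this refinement to the general $\QQ$-divisor $D$.

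The step I expect to be the main obstacle is the cyclic-cover reduction: one must ensure that $Y$ can be taken smooth, that the $\mu_l$-eigensheaf decomposition is genuinely compatible with the Hodge filtration and with the globally defined sheaves $\M_\bullet(D)$, and that the isotypic summand of $f_*$ of the Spencer complex on $Y$ is identified with $Sp_k(\M_\bullet(D))$ on the nose rather than merely locally. The positivity bookkeeping in the filtration-shift argument, and the nef relaxation used in part (2), are comparatively routine once the Hodge-theoretic framework of \cite{Hodge_ideals_Q1} and \cite{Saito_MHM} is in place.
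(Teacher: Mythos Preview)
The paper does not prove this statement: it is quoted verbatim as \cite[Theorem 1.2]{BingYi} and used as a black box, with the only remark being that ``this theorem plays a crucial role in the proof of vanishing theorem for Hodge ideals of $\QQ$-divisors.'' There is therefore no proof in the paper to compare your proposal against.

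That said, your sketch is a plausible outline of how such a result is established, and it correctly identifies the two issues---globalizing the filtered $\mathscr{D}$-module and upgrading the positivity hypothesis. A caution, though: your cyclic-cover step is not quite in the right direction. The whole point of Chen's construction of $\M_k(D)$ and $Sp_k(\M_\bullet(D))$, as summarized in the paper, is to \emph{avoid} the global assumption $M^{\otimes l}\cong\O_X(lD)$ by building a global object that is only locally the graded de Rham complex of $\O_X(*Z)h^{-\alpha}$. Reintroducing a cyclic cover to manufacture such an $M$ upstairs runs counter to that philosophy, and the identification you need---that an isotypic summand of $f_*Sp_k(\M_\bullet(f^*D))$ is $Sp_k(\M_\bullet(D))$ on the nose---is exactly the kind of global compatibility that is delicate without the line bundle $M$ already in hand. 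If you want to flesh this out, the more natural route is to work directly with the local identification $Sp_k(\M_\bullet(D))\cong\operatorname{gr}^F_k\operatorname{DR}(\O_X(*Z)h^{-\alpha})$ and show that Saito's vanishing, which is a local-to-global statement via the strictness of the Hodge filtration, descends to the globally glued complex; this is closer to what Chen actually does.
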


\noindent This theorem plays a crucial role in the proof of the vanishing theorem for Hodge ideals of $\QQ$-divisors. 

\vspace{\bigskipamount}


\section{Borderline Nakano vanishing for hypersurfaces in $\PP^n$} \label{sec Borderline}


Because of the well-known Bott vanishing for projective spaces which says that the cohomology groups $H^i(\PP^n, \Omega^p_{\PP^n} \otimes \O_{\PP^n}(l)) = 0$ unless
\begin{itemize}
    \item $i=p$ and $l=0$,
    \item $i=0$ and $l>p$, or 
    \item $i=n$ and $l< p-n$, 
\end{itemize}
see for example \cite[Theorem 7.2.3]{periodmappinganddomain},
we only consider hypersurfaces of degree at least $2$. In this section, $X$ will denote a smooth hypersurface of degree $d \ge 2$ of $\PP^n$, and $L$ a line bundle on $X$ unless otherwise noted. 

\subsection{The case $n \ge 4$} First, we start with a vanishing theorem for the sheaves $\Omega^p_{\PP^n}|_X$. 

\begin{sublem} \label{bott vanishing res of Pn}
    Let $X$ be a smooth hypersurface in $\PP^n$ of degree $d\ge 2$ with $n \ge 4$. We have 
    \[H^i(X, \Omega_{\PP^n}^p|_X \otimes \O_X(l))=0\] unless  
    \begin{itemize}
        \item $i=p$ and $l=0$, or
        \item $i=p-1$ and $l=d$, or
        \item $i=0$ and $l>p$, or
        \item $i=n-1$ and $l<d+p-n$.
    \end{itemize}   
\end{sublem}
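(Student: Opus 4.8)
The plan is to transfer the computation from $X$ to $\PP^n$, where Bott's formula is available, rather than working intrinsically. Since $X$ is cut out by a form of degree $d$, its ideal sheaf is $\O_{\PP^n}(-d)$, so tensoring the ideal sheaf sequence $0\to\O_{\PP^n}(-d)\to\O_{\PP^n}\to\O_X\to 0$ by the locally free sheaf $\Omega^p_{\PP^n}(l)$ gives
\[
0 \to \Omega^p_{\PP^n}(l-d) \to \Omega^p_{\PP^n}(l) \to \Omega^p_{\PP^n}|_X \otimes \O_X(l) \to 0.
\]
Because the cohomology of the $\O_X$-module on the right is the same computed on $X$ or, after pushforward, on $\PP^n$, the associated long exact sequence reads
\[
\cdots \to H^i\big(\PP^n, \Omega^p_{\PP^n}(l)\big) \to H^i\big(X, \Omega^p_{\PP^n}|_X\otimes\O_X(l)\big) \to H^{i+1}\big(\PP^n, \Omega^p_{\PP^n}(l-d)\big) \to \cdots.
\]

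The key observation is purely formal: by exactness, $H^i(X, \Omega^p_{\PP^n}|_X\otimes\O_X(l))$ is squeezed between a quotient of $H^i(\PP^n,\Omega^p_{\PP^n}(l))$ and a subgroup of $H^{i+1}(\PP^n,\Omega^p_{\PP^n}(l-d))$, so it vanishes whenever both flanking groups vanish. Now I invoke Bott's formula, which asserts that $H^q(\PP^n,\Omega^p_{\PP^n}(k))\neq 0$ only for $q=0,\ k>p$, or $q=p,\ k=0$, or $q=n,\ k<p-n$. For $i\le n-1$ (and $H^i(X,-)=0$ automatically for $i\ge n$ since $\dim X=n-1$, which kills the $q=n$ term on the left) this forces the left flank $H^i(\PP^n,\Omega^p_{\PP^n}(l))$ to be nonzero only when $i=0,\ l>p$ or $i=p,\ l=0$, and the right flank $H^{i+1}(\PP^n,\Omega^p_{\PP^n}(l-d))$ to be nonzero only when $i=p-1,\ l=d$ or $i=n-1,\ l<d+p-n$. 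These four possibilities are exactly the exceptional cases in the statement; outside them both flanks vanish, hence so does $H^i$. I expect no serious obstacle here: even the sharp boundary behaviour is automatic, since the strict inequality $l<d+p-n$ at $i=n-1$ comes directly from the strict inequality $k<p-n$ in Bott's top-cohomology range applied to $\Omega^p_{\PP^n}(l-d)$, and the only thing to check is that the degenerate overlaps (such as $p=n-1$ or $p=0$, where two of the exceptional cases merge) introduce nothing new.

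Alternatively, one can stay on $X$ and feed the Koszul resolution of Lemma \ref{resol_Omega_p} into the hypercohomology spectral sequence after tensoring with $\O_X(l)$; since a line bundle on the hypersurface $X$ has cohomology only in degrees $0$ and $n-1$, this collapses to two rows. I expect the genuine work in that approach to lie in the boundary cases: the two-row estimate only yields the non-strict bounds $l\le d$ at $i=p-1$ and $l\le d+p-n$ at $i=n-1$, and sharpening them to the stated strict bounds requires showing that the Koszul differentials (multiplication by the homogeneous coordinates) are surjective, respectively injective, in top cohomology at the boundary degree — a module-theoretic input that the restriction-sequence argument above sidesteps entirely. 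For this reason I would present the proof via the restriction sequence and Bott's formula, and regard the boundary maps as the only point that would demand real effort if one instead went through the resolution.
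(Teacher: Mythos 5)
Your proposal is correct and follows essentially the same route as the paper: the paper likewise twists the ideal-sheaf sequence $0\to\O_{\PP^n}(-d)\to\O_{\PP^n}\to\O_X\to0$ by $\Omega^p_{\PP^n}(l)$ and reads off the vanishing outside the four exceptional ranges from Bott's formula on the flanking terms of the long exact sequence. The only difference is that the paper's proof goes on to establish \emph{non}vanishing in the $i=0$ and $i=n-1$ ranges (used later for sharpness), which your purely formal squeeze omits but which the statement of the lemma does not require.
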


\begin{proof}

Consider the following short exact sequence:
\[0\rightarrow \Omega_{\PP^n}^p\otimes \O_{\PP^n}(l-d) \rightarrow \Omega_{\PP^n}^p \otimes \O_{\PP^n}(l) \rightarrow \Omega_{\PP^n}^p|_X \otimes \O_X(l) \rightarrow 0.\]
By considering its associated long exact sequence and the Bott vanishing theorem for $\PP^n$, we easily see that for $0<i<n-1$
\[H^i\big(X, \Omega_{\PP^n}^p|_X \otimes \O_X(l)\big) = 0\]
unless $i=p$ and $l=0$, or $i=p-1$ and $l=d$. Now, we are left with two remaining cases $i=0$ and $i=n-1$. By duality, we only need to consider the case $i=0$. 

If $p=0$, it is clear that $H^0\big(X, \O_X(l)\big) = 0$ if $l<0$. If $p\ge 1$, we need to show that 
\[H^0(X, \Omega_{\PP^n}^p|_X \otimes \O_X(l))=0\]
for $l\le p$. Consider the exact sequence
\[0\rightarrow H^0\big(\PP^n, \Omega_{\PP^n}^p \otimes \O_{\PP^n}(l-d)\big) \rightarrow H^0\big(\PP^n, \Omega_{\PP^n}^p \otimes \O_{\PP^n}(l)\big) \rightarrow H^0\big(X, \Omega_{\PP^n}^p|_X \otimes \O_{X}(l)\big) \rightarrow H^1\big(\PP^n, \Omega_{\PP^n}^p \otimes \O_{\PP^n}(l-d)\big).\]
The Bott vanishing theorem for projective spaces gives 
\[H^0(\PP^n, \Omega_{\PP^n}^p\otimes \O_{\PP^n}(l)) = H^1(\PP^n, \Omega^p_{\PP^n}\otimes \O_{\PP^n}(l-d)) = 0\]
which implies 
\[H^0(X, \Omega^p_{\PP^n}|_X \otimes \O_X(l)) = 0.\]

\end{proof}

\vspace{\medskipamount}

Now, to obtain borderline Nakano, as well as further vanishings for the hypersurface $X$, we consider the conormal exact sequence
\[
    0 \rightarrow \O_X(-d) \rightarrow \Omega^1_{\PP^n}|_X \rightarrow \Omega^1_X \rightarrow 0.
\]
By using (\ref{wedge_seq}), we get the short exact sequence
\[
    0 \rightarrow \Omega^{p-1}_X \otimes \O_X(l-d) 
    \rightarrow \Omega^p_{\PP^n}|_X \otimes \O_X(l) 
    \rightarrow \Omega^p_X \otimes \O_X(l)
    \rightarrow 0.
\] 
The associated long exact sequence in cohomology is 

\begin{flushleft}
    $\dots \rightarrow H^{i}\big(X, \Omega_{\PP^n}^p|_X \otimes \O_X(l)\big) \rightarrow H^{i}\big(X,\Omega^p_X \otimes \O_X(l)\big) \rightarrow$
\end{flushleft}
    
\begin{flushright}
    $\rightarrow H^{i+1}\big(X, \Omega^{p-1}_X \otimes \O_X(l-d)\big) \rightarrow H^{i+1}\big(X, \Omega_{\PP^n}^p|_X \otimes \O_X(l)\big) \rightarrow \dots$
\end{flushright}

\noindent Our strategy is to use Lemma \ref{bott vanishing res of Pn} on the above long exact sequence to reduce to extreme cases. 
\begin{itemize}
    \item If $n-3\ge i>p$, we get
    \[H^i\big(X, \Omega_X^p\otimes \O_X(l)\big) \cong H^{i+1}\big(X,\Omega_X^{p-1}\otimes \O_X(l-d)\big).\]
    \item If $2\le i<p$, we get
    \[H^i\big(X, \Omega_X^p\otimes \O_X(l)\big) \cong H^{i-1}\big(X,\Omega_X^{p+1}\otimes \O_X(l+d)\big).\]
    \item If $i=p$, we have the exact sequence
    \[H^{p}\big(X, \Omega_{\PP^n}^p|_X \otimes \O_X(l)\big) \rightarrow H^{p}\big(X,\Omega^p_X \otimes \O_X(l)\big) \rightarrow H^{p+1}\big(X, \Omega^{p-1}_X \otimes \O_X(l-d)\big) \rightarrow 0.\]
\end{itemize}

\vspace{\medskipamount}

\begin{proof}[Proof of Theorem \ref{Bott vanishing hypersurfaces}]

Part $(1)$: $i+p\neq n-1$. By Serre duality, we can assume $i+p<n-1$. 

\begin{itemize}
    \item If $i>p$, then by the above argument, we obtain
\[H^i(X, \Omega^p_X\otimes \O_X(l)) \cong H^{i+1}(X, \Omega^{p-1}_X\otimes \O_X(l-d)) \cong \dots \cong H^{i+p}(X, \O_X(l-pd)) = 0.\]
    \item If $i=p>0$, by Lemma \ref{bott vanishing res of Pn}, we have 
\[H^p(X, \Omega^p_X\otimes \O_X(l)) \cong H^{p+1}(X, \Omega^{p-1}_X\otimes \O_X(l-d)) = 0\] 
for any $l\neq 0$.
    \item If $0<i<p$, we need the following claim:
    
    \textit{Claim:} The natural morphism 
    \[H^k(X, \Omega^k_X) \rightarrow H^k(X, \Omega^{k+1}_{\PP^n}|_X \otimes \O_X(d))\] 
    is an isomorphism for all $k$ such that $2k < n-1$. 

    This claim follows from the following commutative diagram:
    \[\begin{tikzcd}
	{H^k(X, \Omega_X^k) } & {H^k(X, \Omega_{\PP^n}^{k+1}|_X (d)) } \\
	{H^k(X, \Omega_{\PP^n}^{k}|_X) } \\
	{H^{k}(\PP^n, \Omega^{k}_{\PP^n})} & {H^{k+1}(\PP^n, \Omega^{k+1}_{\PP^n})}
	\arrow[from=1-1, to=1-2]
	\arrow["\cong", from=1-2, to=3-2]
	\arrow["\cong", from=2-1, to=1-1]
	\arrow["\cong", from=3-1, to=2-1]
	\arrow["\cong"', from=3-1, to=3-2]
    \end{tikzcd}\]
    in which the lower horizontal morphism is the Lefschetz morphism. Moreover, the isomorphisms 
    \[H^k(X, \Omega^{k+1}_{\PP^n}|_X(d)) \xrightarrow{\cong} H^{k+1}(\PP^n, \Omega^{k+1}_{\PP^n})\]
    and 
    \[H^{k}(\PP^n, \Omega^{k}_{\PP^n}) \xrightarrow{\cong} H^k(X, \Omega^{k}_{\PP^n}|_X)\]
    are the consequences of the Bott vanishing for projective spaces, while the isomorphism 
    \[H^k(X, \Omega^{k}_{\PP^n}|_X) \xrightarrow{\cong} H^k(X, \Omega^k_X)\]
    is due to the first bullet point, i.e. $H^k(X, \Omega^{k-1}_X(-d) = H^{k+1}(X, \Omega^{k-1}_X(-d)) = 0$. The claim is proven. 
    
    Now, we consider two cases:
    
    Case 1: $i+p = 2k+1$, where $0<k<n-1$ by the assumption. By the above argument, we get 
    \[H^i\big(X, \Omega_X^p \otimes \O_X(l)\big) \cong \dots \cong H^k\big(X, \Omega^{k+1}_X \otimes \O_X(l-(k-i)d)\big)\]
    and the exact sequence
    \[H^k\big(X, \Omega^k_X \otimes \O_X(l-(k-i+1)d)\big) \rightarrow H^k\big(X, \Omega^{k+1}_{\PP^n}|_X \otimes \O_X(l-(k-i)d)\big) \rightarrow H^k\big(X, \Omega^{k+1}_X \otimes \O_X(l-(k-i)d)\big) \rightarrow 0.\]
    If $l-(k-i)d \neq d$, the vanishing follows from Lemma \ref{bott vanishing res of Pn}. Otherwise, the first morphism is an isomorphism, which still implies $H^k(X, \Omega_X^{k+1} \otimes \O_X(d)) = 0$. 

    Case 2: $i+p = 2k$, where $1<k<n-1$ by the assumption. Similarly, we get 
    \[H^i\big(X, \Omega_X^p \otimes \O_X(l)\big) \cong \dots \cong H^{k-1}\big(X, \Omega^{k+1}_X \otimes \O_X(l-(k-i-1)d)\big)\]
    and the exact sequence
    \[0 \rightarrow H^{k-1}\big(X, \Omega^{k+1}_X \otimes \O_X(l-(k-i-1)d)\big) \rightarrow  H^k\big(X, \Omega^{k}_X \otimes \O_X(l-(k-i)d)\big) \rightarrow H^{k}\big(X, \Omega^{k+1}_{\PP^n}|_X \otimes \O_X(l-(k-i-1)d)\big)\]
    which gives the vanishing of $H^{k-1}\big(X, \Omega^{k+1}_X \otimes \O_X(l-(k-i-1)d)\big)$. 

    \item If $i=0$, consider the exact sequence
    \[H^{0}(X, \Omega^{p}_{\PP^n}|_X \otimes \O_X(l)) \rightarrow H^{0}(X, \Omega^{p}_X \otimes \O_X(l)) \rightarrow H^{1}(X, \Omega^{p-1}_X \otimes \O_X(l-d))
    \rightarrow H^{1}(X, \Omega^{p}_{\PP^n}|_X \otimes \O_X(l)).\]
    The first term vanishes if $l\le p$ by Lemma \ref{bott vanishing res of Pn}. The third term equals to $0$ unless $p=2$ and $l=d$, in which case, the last morphism is an isomorphism. Thus 
    \[H^{0}(X, \Omega^{p}_X \otimes \O_X(l)) = 0 \quad \text{ if } \quad l\le p.\]
    Moreover, for any $l> p$, 
    \[H^{0}(X, \Omega^{p}_{\PP^n}|_X \otimes \O_X(p+1))\hookrightarrow H^{0}(X, \Omega^{p}_{\PP^n}|_X \otimes \O_X(l))\]
    and 
    \[0\rightarrow H^{0}(X, \Omega^{p}_{\PP^n}|_X \otimes \O_X(p+1)) \rightarrow H^{0}(X, \Omega^{p}_X \otimes \O_X(p+1))\] 
    imply that 
    \[H^{0}(X, \Omega^{p}_X \otimes \O_X(l)) \neq 0 \quad \text{ if } \quad l > p.\]
\end{itemize}
The proof of part $(1)$ is complete. 

\vspace{\medskipamount}

Part $(2)$: $i+p = n-1$. The case $i=0$ and $i=n-1$ are obvious. For the remaining parts, by the argument before this proof, it suffices to prove this theorem for the special cases $p = 1$ and $p=n-2$:
\[
    H^{n-2}\big(X, \Omega^1_X \otimes \O_X(l)\big) = 0 \quad  \text{ if } \quad  l \ge 2d-n,  
\]
and 
\[
    H^{1}\big(X, \Omega^{n-2}_X \otimes \O_X(l)\big) = 0 \quad  \text{ if } \quad  l \ge (n-1)d-n.
\]
Notice that if $i = p = \dfrac{n-1}{2}$, then $l\ge (p+1)d-n > 0$ for $d\ge 2$; hence $H^p\big(X, \Omega^p_X \otimes \O_X(l)\big) \cong H^{p+1}\big(X, \Omega^{p-1}_X \otimes \O_X(l-d)\big)$. 

\noindent To show the first statement, consider the exact sequence
\[
    0 \rightarrow H^{n-2}\big(X, \Omega^1_X \otimes \O_X(l)\big) 
    \rightarrow H^{n-1}\big(X, \O_X(l-d)\big)
    \rightarrow H^{n-1}\big(X, \Omega^1_{\PP^n}|_X \otimes \O_X(l)\big)
    \rightarrow  \dots,
\]
where the vanishing on the left is due to Lemma \ref{bott vanishing res of Pn}. The last term $H^{n-1}\big(X, \Omega^1_{\PP^n}|_X \otimes \O_X(l)\big)$ sits in the following exact sequence \[
    0 \rightarrow H^{n-1}\big(X, \Omega^1_{\PP^n}|_X \otimes \O_X(l)\big)
    \rightarrow V \otimes H^{n-1}\big(X, \O_X(l-1)\big) 
    \rightarrow H^{n-1}\big(X, \O_X(l)\big)
    \rightarrow 0,
\]
where $V = H^0\big(X, \O_X(1)\big)$. By Serre duality, 
\[
    H^{n-1}\big(X, \O_X(l-d)\big) \cong H^0\big(X, \O_X(2d-l-n-1)\big)^\vee
\]
and 
\[
    H^{n-1}\big(X, \Omega^1_{\PP^n}|_X \otimes \O_X(l)\big) \cong \operatorname{coker}\Big(H^0\big(X, \O_X(d-l-n-1)\big) \rightarrow V^\vee \otimes H^0\big(X, \O_X(d-l-n)\big)\Big)^\vee.
\]
These isomorphisms imply that the vanishing of $H^{n-2}\big(X, \Omega^1_X \otimes \O_X (l)\big)$ is equivalent to the surjectivity of the morphism 
\[
\operatorname{coker}\Big(H^0\big(X, \O_X(d-l-n-1)\big) \rightarrow V^\vee \otimes H^0\big(X, \O_X(d-l-n)\big)\Big) \rightarrow H^0\big(X, \O_X(2d-l-n-1)\big).
\]
If $l \ge 2d-n$, then both sides vanish; the first statement is proved. 

\noindent If $2d-n-1 \ge l > d-n$, then the morphism is simply $0 \rightarrow H^0\big(X, \O_X(2d-s-n-1)\big) \neq 0$ which is never surjective.

\vspace{\medskipamount}

Set $r = l+d-n-1$. The second statement is then equivalent to 
\[
    H^1\big(X, T_X \otimes \O_X(r)\big)=0 \quad  \text{ if } \quad  r \ge n(d - 2)-1.
\]
The normal sequence and the Euler sequence together form the diagram:
\[\begin{tikzcd}
	& 0 & 0 & 0 \\
	0 & {T_X} & {T_{\PP^n}|_X} & {N_{X/\PP^n}} & 0 \\
	0 & {\ker(J_X(f))} & {V^\vee\otimes\O_X(1)} & {N_{X/\PP^n}} & 0 \\
	& {\O_X} & {\O_X} \\
	& 0 & 0
	\arrow[from=2-1, to=2-2]
	\arrow[from=2-2, to=2-3]
	\arrow[from=2-3, to=2-4]
	\arrow[from=2-4, to=2-5]
	\arrow[from=2-2, to=1-2]
	\arrow[from=2-3, to=1-3]
	\arrow[from=2-4, to=1-4]
	\arrow[from=3-2, to=2-2]
	\arrow[from=3-3, to=2-3]
	\arrow[equal, from=3-4, to=2-4]
	\arrow[from=4-2, to=3-2]
	\arrow[from=4-3, to=3-3]
	\arrow[from=5-2, to=4-2]
	\arrow[from=5-3, to=4-3]
	\arrow[from=3-2, to=3-3]
	\arrow[equal, from=4-2, to=4-3]
	\arrow["{J_X(f)}", from=3-3, to=3-4]
	\arrow[from=3-1, to=3-2]
	\arrow[from=3-4, to=3-5]
\end{tikzcd}\]
where $f$ is a homogeneous polynomial of degree $d$ defining $X$, and $J_X(f)$ is the multiplication by the Jacobian of $f$ which sends a $(n+1)$-tuple of linear functionals $(g_0,\dots,g_n)$ to $\sum g_i \dfrac{\partial f}{\partial x_i}$. By tensoring with $\O_X(r)$ and taking the long exact sequence of cohomology, we get
\[\begin{tikzcd}
	{H^1\big(X, \O_X(r)\big) = 0} \\
	{H^0\big(X, T_{\PP^n}|_X \otimes \O_X(r)\big)} & {H^0\big(X, \O_X(d+r)\big)} & {H^1\big(X, T_X \otimes \O_X(r)\big)} & {0} \\
	{V^\vee \otimes H^0\big(X, \O_X(r+1)\big)} & {H^0\big(X, \O_X(d+r)\big)}
	\arrow["\phi", from=2-1, to=2-2]
	\arrow[two heads, from=2-2, to=2-3]
	\arrow[from=2-3, to=2-4]
	\arrow[from=2-1, to=1-1]
	\arrow[two heads, from=3-1, to=2-1]
	\arrow[equal, from=3-2, to=2-2]
	\arrow["{J_X(f)}", from=3-1, to=3-2]
\end{tikzcd}\]
where the vanishing on the right is due to Lemma \ref{bott vanishing res of Pn}. Thus, the vanishing of $H^1\big(X, T_X \otimes \O_X (r)\big)$ is equivalent to the surjectivity of $\phi$, and hence of the morphism $J_X(f)$.

\vspace{\medskipamount}

Consider the following diagram:
\[\begin{tikzcd}
	{V^\vee \otimes H^0\big(X, \O_X(r+1)\big)} & {H^0\big(X, \O_X(d+r)\big)} \\
	{V^\vee \otimes H^0\big(\PP^n, \O_{\PP^n}(r+1)\big)} & {H^0\big(\PP^n, \O_{\PP^n}(d+r)\big)} \\
	& {H^0\big(\PP^n, \O_{\PP^n}(r)\big)}
	\arrow["{J_X(f)}", from=1-1, to=1-2]
	\arrow["{J_{\PP^n}(f)}", from=2-1, to=2-2]
	\arrow[two heads, from=2-1, to=1-1]
	\arrow[two heads, from=2-2, to=1-2]
	\arrow[dashed, from=2-1, to=1-2]
	\arrow["f", hook, from=3-2, to=2-2]
\end{tikzcd}\]
Notice that the vertical inclusion is given by the multiplication by $f$ (that we also denote by $f$). Since $d\cdot f = \sum_{i=0}^{n} x_i \dfrac{\partial f}{\partial x_i}$, the image of $J_{\PP^n}(f)$ contains the image of $f$, which means the surjectivities of $J_X(f)$ and $J_{\PP^n}(f)$ are equivalent. Combining these arguments, it suffices to prove that $J_{\PP^n}(f)$ is surjective if $r \ge n(d-2) - 1$. But this follows from the following lemma; the proof of Theorem \ref{Bott vanishing hypersurfaces} is complete.
\end{proof} 

\begin{sublem} [Macaulay's theorem] \emph{(\cite[Theorem 7.4.1]{periodmappinganddomain})}
    Suppose there is an ordered $(n+1)$-tuple of homogeneous polynomials $F_i \in \mathbb{C}[x_0,\dots,x_n]$ of degree $d_i$ that form a regular sequence. Let $R = \mathbb{C}[x_0,\dots,x_n]/(F_0,\dots, F_n)$ be the quotient of the polynomial ring by the ideal generated by $\{F_i\}_{i=0}^{n}$. Then:
    \begin{enumerate}
        \item $R_k = 0$ if $k > \rho = \sum_{i=0}^{n} d_i - n - 1$ and $R_\rho \cong \mathbb{C}$.
        \item The multiplication in the polynomial ring induces a perfect pairing $R_k \otimes R_{\rho -k} \rightarrow R_\rho$.
    \end{enumerate}
\end{sublem}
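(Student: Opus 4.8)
The plan is to exploit the fact that a regular sequence of length $n+1$ inside $S = \CC[x_0,\dots,x_n]$ makes $R$ an Artinian graded complete intersection, hence Gorenstein, and to read off both assertions from this structure. The central tool is the Koszul complex $K_\bullet(F_0,\dots,F_n)$: since $F_0,\dots,F_n$ form a regular sequence, this complex is exact and furnishes a graded free resolution
\[
    0 \to S\Big(-\textstyle\sum_{i=0}^n d_i\Big) \to \cdots \to \bigoplus_{i<j} S(-d_i-d_j) \to \bigoplus_{i=0}^n S(-d_i) \to S \to R \to 0 .
\]
Everything I need about $R$ can be extracted from this resolution and its self-duality.

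For part (1) I would compute the Hilbert series of $R$. Since Hilbert series are additive along exact sequences, the resolution yields
\[
    H_R(t) = \frac{\prod_{i=0}^n (1-t^{d_i})}{(1-t)^{n+1}} = \prod_{i=0}^n \big(1 + t + \cdots + t^{d_i-1}\big),
\]
which is a genuine polynomial because each factor $1-t^{d_i}$ absorbs one factor of $1-t$. This polynomial has degree $\sum_{i=0}^n (d_i-1) = \rho$ and leading coefficient $1$, so $R_k = 0$ for $k>\rho$ and $\dim_{\CC} R_\rho = 1$, i.e. $R_\rho \cong \CC$.

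For part (2), the substantive point, I would establish the Gorenstein self-duality $\omega_R \cong R(\rho)$ and deduce the perfect pairing from it. One route computes $\omega_R = \operatorname{Ext}^{n+1}_S(R,\omega_S)$ directly from the self-dual Koszul resolution; equivalently, starting from $\omega_S \cong S(-n-1)$ and using the standard shift $\omega_{A/fA} \cong (\omega_A/f\omega_A)(e)$ for quotient of a Cohen--Macaulay graded ring by a homogeneous nonzerodivisor $f$ of degree $e$, I would quotient successively by $F_0,\dots,F_n$ to get $\omega_R \cong R\big(-n-1+\sum_i d_i\big) = R(\rho)$. For the Artinian ring $R$ the graded canonical module is just the graded $\CC$-dual, $\omega_R \cong \operatorname{Hom}_{\CC}(R,\CC)$, so reading the isomorphism $R(\rho) \cong \operatorname{Hom}_{\CC}(R,\CC)$ in each degree gives $R_{\rho-k} \cong \operatorname{Hom}_{\CC}(R_k,\CC)$ compatibly with the $R$-module structure; this compatibility is exactly the statement that the multiplication pairing $R_k \otimes R_{\rho-k} \to R_\rho \cong \CC$ is perfect.

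The main obstacle is part (2): once part (1) is in hand the pairing at least lands in a one-dimensional space, but its non-degeneracy is real content and is precisely the Gorenstein property. The cleanest argument is the canonical-module identification above combined with graded (Matlis) local duality for Artinian graded rings; the delicate point in either formulation is bookkeeping the degree shift, and one must verify that the successive shifts accumulate to exactly $\rho$ and that the duality isomorphism is $R$-linear rather than merely $\CC$-linear, since only $R$-linearity translates into the multiplicativity of the pairing.
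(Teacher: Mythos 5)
Your proposal is correct, but note that there is no in-paper proof to compare it against: the paper states this lemma verbatim as a quoted result, citing \cite[Theorem 7.4.1]{periodmappinganddomain}, and uses it as a black box (its only added remark is the standard fact that the partials of a polynomial defining a smooth hypersurface form a regular sequence). Your argument supplies what the paper delegates to the reference, and it does so by the standard complete-intersection route. Part (1) is exactly right: the Koszul resolution gives $H_R(t)=\prod_{i=0}^{n}\bigl(1+t+\cdots+t^{d_i-1}\bigr)$, a polynomial of degree $\sum_i(d_i-1)=\rho$ with leading coefficient $1$, which yields both $R_k=0$ for $k>\rho$ and $\dim_{\CC}R_\rho=1$. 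For part (2), your two ways of getting $\omega_R\cong R(\rho)$ (self-duality of the Koszul complex computing $\operatorname{Ext}^{n+1}_S(R,\omega_S)$, or iterating $\omega_{A/FA}\cong(\omega_A/F\omega_A)(\deg F)$ starting from $\omega_S\cong S(-n-1)$) both check out, and the shift bookkeeping accumulates to $-n-1+\sum_i d_i=\rho$ as claimed. You are also right that $R$-linearity is the crux: an $R$-module isomorphism $\phi\colon R(\rho)\to\operatorname{Hom}_{\CC}(R,\CC)$ sends $1$ to a nonzero functional $\lambda$ on $R_\rho$, and $\phi(r)=\lambda(r\,\cdot\,)$; since $\dim_{\CC}R_\rho=1$ by part (1), $\lambda$ identifies $R_\rho$ with $\CC$, so bijectivity of $\phi$ in each degree is precisely perfectness of the multiplication pairing $R_k\otimes R_{\rho-k}\to R_\rho$, and it delivers $\dim R_k=\dim R_{\rho-k}$ and nondegeneracy in one stroke. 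The only dependency worth flagging is graded Matlis duality for Artinian graded $\CC$-algebras ($\omega_R\cong\operatorname{Hom}_{\CC}(R,\CC)$), which is standard; alternatively, one can avoid canonical modules entirely and prove nondegeneracy by descending induction along the regular sequence, which is closer in spirit to some textbook treatments, but your Gorenstein-duality argument is complete as it stands.
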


\noindent We use the well-known fact that the partial derivatives of a homogeneous polynomial giving a smooth hypersurface in $\PP^n$ form a regular sequence in $\CC[x_0, \dots, x_n]$. 

\vspace{\medskipamount}

\begin{remark}
    Although we do not know whether there is extra borderline Nakano vanishing, the lower bound for $l$ in Theorem \ref{Bott vanishing hypersurfaces} cannot be made smaller. As we have seen in the proof of the first statement, 
    \[H^{n-2}\big(X, \Omega^1_X \otimes \O_X(l)\big) \neq 0 \quad if \quad 2d-n-1 \ge l > d-n.\]
    On the other hand, for $l = (n-1)d-n-1$, part $(2)$ of Macaulay's theorem implies that $J_X(f)$ is not surjective, or equivalently, 
    \[H^1\big(X, \Omega^{n-2}_X \otimes \O_X((n-1)d-n-1)\big) \neq 0.\] 
\end{remark}

\vspace{\bigskipamount}

\subsection{The case $n=3$}

\begin{proof}[Proof of Proposition \ref{bott vanishing surfaces}] We will consider the cases $d=2$, $d=3$ and $d\ge 4$ separately. 

\textbf{Case 1}: If $d=2$, $X$ is a quadric surface, which is isomorphic to $\PP^1 \times \PP^1$ via the Segre embedding into $\PP^3$.  Any ample line bundle on $X$ is of the form $\O_X(a,b) \coloneqq p_1^*\O_{\PP^1}(a) \otimes p_2^*\O_{\PP^1}(b)$ with $a,b>0$, where $p_1, p_2:X \rightarrow \PP^1$ are the projections to $\PP^1$. Consider the exact sequence of sheaves of relative differential forms:
\[\begin{tikzcd}
	0 & {p_2^*\Omega^1_{\PP^1}} & {\Omega^1_X} & {\Omega^1_{X/\PP^1}} & \bullet \\
	& {\O_X(0,-2)} && {i_*\O_{\PP^1}(-2)}
	\arrow[from=1-1, to=1-2]
	\arrow[from=1-2, to=1-3]
	\arrow[from=1-3, to=1-4]
	\arrow[from=1-4, to=1-5]
	\arrow[equal, from=2-2, to=1-2]
	\arrow[equal, from=2-4, to=1-4]
\end{tikzcd}\]
where $i: \PP^1 \rightarrow X$ is the embedding of the first factor. Since $\omega_X \cong \O_X(-2,-2)$, then 
\[H^1\big(X, O_X(0,-2) \otimes \O_X(a,b)\big) \cong H^1\big(X, \omega_X \otimes O_X(a+2,b)\big) = 0\]
and 
\[H^1\big(X, i_*\O_{\PP^1}(-2) \otimes \O_X(a,b)\big) \cong H^1\big(\PP^1, \O_{\PP^1}(a-2)\big) = 0.\]
As a result, we have 
\[H^1\big(X, \Omega^1_X \otimes \O_X(a,b)\big) = 0.\]

\textbf{Case 2}: If $d=3$, $X$ is a smooth cubic surface, which is a blow-up of $\PP^2$ at $6$ points. Let $\pi:X \rightarrow \PP^2$ be the blow-up morphism with exceptional divisors $E_1, \dots, E_6$ isomorphic to $\PP^1$. Consider the exact sequence:
\[
    0 \rightarrow \pi^*\Omega^1_{\PP^2} \rightarrow \Omega^1_{X} \rightarrow i_*\omega_{E/\PP^2} \rightarrow 0,
\]
where $i:E = \bigsqcup E_i \rightarrow X$ is the inclusion of the exceptional divisor. Taking the long exact sequence of cohomology yields 
\[
    \dots \rightarrow H^1\big(X, \pi^*\Omega^1_{\PP^2} \otimes L\big) 
    \rightarrow H^1\big(X, \Omega^1_X \otimes L\big)
    \rightarrow H^1\big(X, L \otimes i_*\omega_{E/\PP^2}\big) 
    \rightarrow H^2\big(X, \pi^*\Omega^1_{\PP^2} \otimes L\big) 
    \rightarrow \dots
\]
Moreover, any line bundle $L$ on $X$ is of the form $\O_X(a\pi^*H + \sum_{i=1}^6 a_i E_i)$ where $H$ is the hyperplane divisor of $\PP^2$. Note that 
\[\O_X(1) \cong \O_X(3 \pi^* H - \sum_{i=1}^6 E_i).\]
Since $\O_{E_i}(E_i) \cong \O_{\PP^1}(-1)$, the condition $L(-2)$ ample implies that $a>6$ and $a_i + 2 < 0$ for all $1\le i \le 6$. By base change, we have 
\[R^1 \pi_* \O_X(\sum_{i=1}^6 a_i E_i) \cong \oplus_{i=1}^6 H^1(\PP^1, \O_{\PP^1}(-a_i)) = 0;\] so
\[
    H^1\big(X, \pi^*\Omega^1_{\PP^2} \otimes L\big) 
    \cong H^1\big(\PP^2, \Omega^1_{\PP^2} \otimes \pi_* L\big) 
    \cong H^1\big(\PP^2, \Omega^1_{\PP^2} \otimes \O_{\PP^2}(a)\big) = 0.
\]
On the other hand, since $\omega_{E/\PP^2} = \O_{\PP^3}(-2)|_E$, then 
\[H^1\big(X, L \otimes i_* \omega_{E/\PP^2}\big) \cong \oplus_{i=1}^6 H^1\big(\PP^1, \O_{\PP^1}(-a_i-2)\big) = 0.\] 
Thus we get the vanishing of $H^1\big(X, \Omega^1_X \otimes L\big)$. 

\textbf{Case 3}: If $d \ge 4$, we have the exact sequence 
\[
    \dots \rightarrow H^1\big(X, L(-d)\big)
    \rightarrow H^1\big(X, \Omega^1_{\PP^3}|_X \otimes L\big)
    \rightarrow H^1\big(X, \Omega^1_X \otimes L\big)
    \rightarrow H^2\big(X, L(-d)\big) \rightarrow \dots
\]
By the Nakano vanishing theorem, $H^1\big(X, L(-d)\big) = H^2\big(X, L(-d)\big) = 0$ if $L(4-2d)$ is ample, so it suffices to prove the vanishing of $H^1\big(X, \Omega^1_{\PP^3}|_X \otimes L\big)$.
    
Consider the exact sequence
\[
    0 \rightarrow \Omega^2_{\PP^3}|_X \otimes L \rightarrow \bigwedge^2 V \otimes L(-2) \rightarrow \Omega^1_{\PP^3}|_X \otimes L \rightarrow 0,
\]
where $V = H^1(X, \O_X(1))$. Its associated long exact sequence is 
\[
    \dots \rightarrow \bigwedge^2 V \otimes H^1\big(X, L(-2)\big) 
    \rightarrow H^1\big(X, \Omega^1_{\PP^3}|_X \otimes L\big) 
    \rightarrow H^2\big(X, \Omega^2_{\PP^3}|_X \otimes L\big)
    \rightarrow \bigwedge^2 V \otimes H^2\big(X, L(-2)\big)
    \rightarrow \dots
\]
By the Nakano vanishing theorem, the first and the last terms vanish if $L(2-d)$ is ample, which holds under the assumption because $d \ge 2$. Moreover, $H^2\big(X, \Omega^2_{\PP^3}|_X \otimes L\big)$ sits in the exact sequence 
\[
    \dots \rightarrow H^2\big(X, \Omega^1_X\otimes L(-d)\big) 
    \rightarrow H^2\big(X, \Omega^2_{\PP^3}|_X \otimes L\big)
    \rightarrow H^2\big(X, \Omega^2_X \otimes L\big) \rightarrow \dots
\]
Again, the Nakano vanishing theorem applies in this case and we get $H^2\big(X, \Omega^2_{\PP^3}|_X \otimes L\big) = 0$ which implies $ H^1\big(X, \Omega^1_{\PP^3}|_X \otimes L\big) = 0$; the theorem is proved. 
\end{proof}


\vspace{\bigskipamount}

\section{The vanishing theorem for Hodge ideals of $\QQ$-divisors with isolated singularities} \label{sec_iso4}

In this section, we prove Theorem \ref{vanishing_isolated_sing}. Notice that in the case $d=1$, i.e $X = \PP^{n-1}$, the condition in part $(3)$ is simply $l \ge m-n-1$ (this condition will appear in the proof), in which case the assertion is a result due to Musta\c t\u a and Popa, see \cite[Variant 12.5]{Hodge_ideals_Q1} and \cite[Variant 4.5]{BingYi}. Thus in this section, we only consider the case $d \ge 2$.

By Remark \ref{I_0}, the vanishing of $I_0(D)$ is simply a consequence of the Nadel vanishing theorem:
\[
     H^1\big(X, \omega_X(D) \otimes I_0(D) \otimes \O_X(l)\big) = 0
\]
if $l+\epsilon m > 0$ for any $0 < \epsilon \ll 1$, which is equivalent to $l\ge 0$.
    
\vspace{\medskipamount}
    
Moreover, if $I_k(D) = \O_X(Z-\cD) \cong \O_X(-a)$, then our vanishing result is simply a consequence of the Kodaira vanishing theorem 
\[
    H^i\big(X, \omega_X(kZ) \otimes I_k(D) \otimes \O_X(l)\big) \cong H^i\big(X, \omega_X(kZ) \otimes \O_X(l-a)\big) = 0
\]
for all $l$ if $1 \le i \neq n-1$ and for $(l-a)H + kZ$ ample if $i=n-1$. Hence, the theorem is trivial in this case. Thus we may assume that $I_k(D) \subsetneq \O_X(Z-\cD)$. Recall that the ideals $I_k(D) \otimes \O_X(\cD - Z)$ are cosupported on the singular locus of $D$, then $\dim(Z_k) = 0$, and hence all higher cohomologies of quasi-coherent sheaves on $Z_k$ vanish. This argument leads to the following lemma:

\begin{sublem} \label{use Nakano and isosing}
    Under the same assumptions in Theorem \ref{vanishing_isolated_sing}, we have
    \[
        H^i\big(X, \Omega_X^j \otimes I_k(D) \otimes \O_X(l)\big) = 0
    \]
    for all $i \ge 2$, $i+j \ge n$ and $l> a$.
\end{sublem}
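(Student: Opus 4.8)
The plan is to deduce this vanishing statement by comparing the cohomology of $\Omega_X^j \otimes I_k(D)$ with that of $\Omega_X^j$ (for which Theorem \ref{Bott vanishing hypersurfaces} applies) using the exact sequence relating the Hodge ideal to the structure sheaf, and then exploiting the fact that the quotient is supported on the zero-dimensional scheme $Z_k$. Concretely, since we have reduced to the case $I_k(D) \subsetneq \O_X(Z - \cD) \cong \O_X(-a)$, there is a short exact sequence
\[
    0 \rightarrow \Omega_X^j \otimes I_k(D) \otimes \O_X(l) \rightarrow \Omega_X^j \otimes \O_X(l-a) \rightarrow \Omega_X^j \otimes \mathcal{Q} \otimes \O_X(l) \rightarrow 0,
\]
where $\mathcal{Q} = \O_X(-a)/I_k(D)$ is a coherent sheaf supported on $Z_k$. **First I would** record that, because $\dim Z_k = 0$, every higher cohomology of the rightmost term vanishes: $H^i(X, \Omega_X^j \otimes \mathcal{Q} \otimes \O_X(l)) = 0$ for all $i \ge 1$. **Then** the associated long exact sequence yields, for each $i \ge 2$, an isomorphism
\[
    H^i\big(X, \Omega_X^j \otimes I_k(D) \otimes \O_X(l)\big) \cong H^i\big(X, \Omega_X^j \otimes \O_X(l-a)\big),
\]
so it suffices to show the right-hand side vanishes whenever $i \ge 2$, $i + j \ge n$, and $l > a$.

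**The heart of the argument** is then to verify this vanishing of $H^i(X, \Omega_X^j \otimes \O_X(l-a))$ purely from the borderline Nakano results already established. Writing $l' = l - a \ge 1$, I would split into two regimes according to where the pair $(i,j)$ sits relative to the Nakano boundary. When $i + j > n$, the ordinary Nakano vanishing theorem (quoted in the introduction) applies directly since $\O_X(l')$ is ample for $l' \ge 1$, giving the vanishing immediately. The genuinely borderline case is $i + j = n$, which is exactly the regime treated by part (2) of Theorem \ref{Bott vanishing hypersurfaces}: here I must check that the hypothesis $l > a$, i.e. $l' \ge 1$, forces $l'$ into the vanishing range $l' \ge (j+1)d - n$ guaranteed by that theorem. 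This is the step I expect to require a small but careful numerical check, since for large $j$ the threshold $(j+1)d - n$ can exceed $1$; I would need either to invoke the excluded degenerate cases of Theorem \ref{Bott vanishing hypersurfaces} or to observe that the relevant $(i,j)$ with $i \ge 2$ and $i + j = n$ keep $j \le n-2$ small enough, together with the Serre-duality symmetry, so that the bound is met.

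**The main obstacle** I anticipate is precisely reconciling the clean hypothesis $l > a$ with the $d$-dependent thresholds in the borderline case $i + j = n$, rather than the homological bookkeeping, which is routine once the zero-dimensionality of $Z_k$ is in hand. In particular one must ensure no boundary term of the form $H^p(X, \Omega_X^p \otimes \O_X(0))$ (which is nonzero) can be accidentally invoked; the strict inequality $l > a$ is what rules this out, since it forces $l' \ge 1 \neq 0$, placing us safely away from the exceptional case $i = p,\ l = 0$ of Theorem \ref{Bott vanishing hypersurfaces}. **I would close** by combining the two regimes to conclude the desired vanishing for all $i \ge 2$ with $i + j \ge n$ and $l > a$.
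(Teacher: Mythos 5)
Your homological skeleton is exactly the paper's: you pass to the short exact sequence
\[
0 \rightarrow \Omega_X^j \otimes I_k(D) \otimes \O_X(l) \rightarrow \Omega_X^j \otimes \O_X(l-a) \rightarrow \Omega_X^j \otimes \mathcal{Q} \otimes \O_X(l) \rightarrow 0,
\]
kill the higher cohomology of the quotient because $Z_k$ is zero-dimensional, and reduce to the vanishing of $H^i\big(X, \Omega_X^j \otimes \O_X(l-a)\big)$. The gap is in what you call the heart of the argument. You treat $i+j=n$ as the Nakano borderline, but $X$ is a hypersurface in $\PP^n$, so $\dim X = n-1$ and the Nakano vanishing range on $X$ is $i+j > n-1$, i.e.\ $i+j \ge n$. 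Hence \emph{every} case of the lemma, including $i+j=n$, is covered by the ordinary Nakano vanishing theorem applied to the ample twist $\O_X(l-a)$ with $l-a \ge 1$; no borderline input is needed, and this is precisely the paper's one-line proof. Note also that Theorem \ref{Bott vanishing hypersurfaces}(2) treats $i+p = n-1$ (the true borderline on the $(n-1)$-fold $X$), not $i+p = n$, so it does not even speak to the regime you route through it.

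This matters because the ``small but careful numerical check'' you defer would in fact fail as posed: for $i=2$, $j=n-2$ the threshold from Theorem \ref{Bott vanishing hypersurfaces}(2) would be $l-a \ge (j+1)d-n = (n-1)d-n$, which is at least $n-2 > 1$ for all $d \ge 2$, $n\ge 4$ (e.g.\ $n=4$, $d=3$ demands $l-a \ge 5$, while the complementary bound $l-a \le n-2d = -2$ also fails), and neither the excluded exceptional cases of that theorem nor Serre duality repairs this in general. So had the case $i+j=n$ genuinely been borderline, the hypothesis $l > a$ would be insufficient and your argument would break there. Once the dimension count is corrected, the deferred step evaporates and your proof closes immediately, coinciding with the paper's.
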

\begin{proof}
    This is an immediate application of the Nakano vanishing theorem and the above argument to the exact sequence
    \[
        \dots \rightarrow H^{i-1}\big(Z_k, \Omega_X^j(Z-\cD) \otimes \O_X(l)|_{Z_k}\big)
        \rightarrow H^i\big(X, \Omega_X^j \otimes I_k(D) \otimes \O_X(l)\big) 
        \rightarrow H^i\big(X, \Omega_X^j(Z-\cD) \otimes \O_X(l)\big) \rightarrow \dots
    \]
\end{proof}

\begin{proof}[Proof of Theorem \ref{vanishing_isolated_sing}]
Parts $(1)$ and $(2)$: The term $H^i\big(X, \O_X(kZ) \otimes I_k(D) \otimes O_X(l)\big)$ sits in the following exact sequence
    \begin{flushleft}
        $\dots \rightarrow H^{i-1}\big(Z_k, \O_X((k+1)Z-\cD) \otimes O_X(l)|_{Z_k}\big)
        \rightarrow H^i\big(X, \O_X(kZ) \otimes I_k(D) \otimes O_X(l)\big) \rightarrow $    
    \end{flushleft}
    \begin{flushright}
        $\rightarrow H^i\big(X, \O_X((k+1)Z-\cD) \otimes O_X(l)\big) \rightarrow \dots$
    \end{flushright}
The first term vanishes since $i-1 > \dim(Z_k)$, while the last term 
\[
    H^i\big(X, \O_X((k+1)Z-\cD) \otimes O_X(l)\big) 
\]
vanishes for all $l$ if $0 < i \neq n-1$, and for $(l-a+n+1-d)H + kZ$ ample if $i = n-1$, by the Nakano vanishing theorem; the assertions $(1)$ and $(2)$ follow. 

\vspace{\medskipamount}

Part $(3)$: Let $\l = l+n+1-d$ and consider the vanishing of $H^1\big(X, \omega_X(kZ) \otimes I_k(D) \otimes \O_X(\tilde{l})\big)$. We will argue by induction on $k$, using the short exact sequence
\[
    0 \rightarrow \omega_X((k-1)Z) \otimes I_{k-1}(D) \otimes \O_X(\l)
    \rightarrow \omega_X(kZ) \otimes I_k(D) \otimes \O_X(\l)
    \rightarrow \M_k(D) \otimes \O_X(\l)
    \rightarrow 0. 
\]
Suppose the theorem holds for $I_{k-1}(D)$, then it suffices to show the vanishing of
\[
    H^1\big(X, \M_k(D) \otimes \O_X(\l)\big)
\]
for $\l \ge \max\{k(d-2) + m, k(d-2) + (n-2)(d-1) + a\}$. Consider the complex $C^\bullet = Sp_{k-n+1}(\M_{\bullet}(D)) \otimes \O_X(\l))[-k]$:
\[
    [\M_{0}(D) \otimes \bigwedge^{k} T_X \otimes \O_X(\l) \rightarrow \M_{1}(D) \otimes \bigwedge^{k-1} \otimes \O_X(\l)
    \rightarrow \dots 
    \rightarrow \M_k(D) \otimes \O_X(\l)] 
\]

\vspace{\medskipamount}

\noindent concentrated in degrees $0$ to $k$, with the associated spectral sequence of $C^\bullet$:
\[
    E_1^{p,q} = H^q(X, C^p) \Rightarrow \mathbb{H}^{p+q}(X, C^\bullet).
\] 
We are interested in the vanishing of $E_1^{k,1}$. Note that we have the vanishing of $E_1^{k+1,j} = E_1^{-1,j} = 0$ for all $j$ since $C^{k+1} = C^{-1} = 0$. Moreover, Theorem \ref{Saito_type_vanishing} implies $\mathbb{H}^j(X, C^\bullet) = 0$ for $j \ge k+1$ and $\l\ge m$, so it suffices to prove $E_1^{k,1} = E_\infty^{k,1}$, or more precisely, $E_r^{k,1} = E_{r+1}^{k,1}$ for all $r \ge 1$. 

\noindent Observe that $E_r^{k,1} = E_{r+1}^{k,1}$ for all $r > k$ since $C^\bullet$ has length $k+1$, and $E_r^{k+r,2-r} = 0$ because $C^{k+r} = 0$ for all $r \ge 1$. Thus it suffices to show that $E_1^{k-r,r} = 0$ for $1 \le r \le k$ and $\l \ge \max\{k(d-2) + m, k(d-2) + (n-2)(d-1) + a\}$. Now, the term $E_1^{k-r,r} = H^{r}\big(X, \M_{k-r} \otimes \bigwedge^r T_X \otimes \O_X(\l)\big)$ sits in the exact sequence 
\begin{flushleft}
    $\dots \rightarrow H^{r}\big(X, \Omega^{n-1-r}_X((k-r)Z)\otimes I_{k-r}(D) \otimes \O_X(\l)\big) 
    \rightarrow E_1^{k-r,r} \rightarrow$    
\end{flushleft}
\begin{flushright}
    $\rightarrow H^{1+r}\big(X, \Omega^{n-1-r}_X((k-r-1)Z) \otimes I_{k-r-1}(D) \otimes \O_X(\l)\big)
    \rightarrow \dots$,
\end{flushright}
in which the last term vanishes if $(\l-a)H+(k-r-1)Z$ is ample due to Lemma \ref{use Nakano and isosing} (note that the last term is $0$ if $r=k$). On the other hand, the first term sits in the exact sequence
\begin{flushleft}
    $\dots \rightarrow H^{r-1}\big(Z_{k-r}, \Omega^{n-1-r}_X((k-r)Z) \otimes \O_X(\l-a)|_{Z_{k-r}}\big) \rightarrow $
\end{flushleft}
\[
    \rightarrow H^r\big(X, \Omega^{n-1-r}_X((k-r)Z) \otimes \O_X(\l) \otimes I_{k-r}(D)\big) \rightarrow 
\]
\begin{flushright}
    $\rightarrow H^r\big(X, \Omega^{n-1-r}_X((k-r)Z) \otimes \O_X(\l-a)\big)
    \rightarrow \dots$
\end{flushright}
By Theorem \ref{Bott vanishing hypersurfaces}, the last term vanishes if $(\l-a+n-(n-r)d)H+(k-r)Z$ is nef which holds under our assumption. If $r\ge 2$, the first term also vanishes since $\dim(Z_{k-r}) = 0$, and we are done. Thus we are left with the case $r=1$. By comparing to the corresponding exact sequence with $\Omega^{n-2}_{X}$ replaced by $\Omega^{n-2}_{\PP^n}|_X$, we get the commutative diagram: 
\[\begin{tikzcd}
	\begin{tabular}{c}
	     $H^0\big(X, \Omega_X^{n-2}((k-1)Z)$ \\ $\otimes$ \\ $\O_X(\l-a)\big)$  
	\end{tabular}
	& 
	\begin{tabular}{c}
	     $H^0\big(Z_0, \Omega_X^{n-2}((k-1)Z)$ \\ $\otimes$ \\ $\O_X(\l-a)|_{Z_0}\big)$
	\end{tabular}
	& 
	\begin{tabular}{c}
	     $H^1\big(X, \Omega^{n-2}_X((k-1)Z)$ \\ $\otimes$ \\ $\O_X(\l) \otimes I_{k-1}(D)\big)$
	\end{tabular} 
	\\
	\begin{tabular}{c}
	     $H^0\big(X, \Omega_{\PP^n}^{n-2}((k-1)Z)|_X$ \\ $\otimes$ \\ $\O_X(\l-a) \big)$  
	\end{tabular}
	& 
	\begin{tabular}{c}
	     $H^0\big(Z_0, \Omega_{\PP^n}^{n-2}((k-1)Z)|_X$ \\ $\otimes$ \\ $\O_X(\l-a)|_{Z_0}\big)$
	\end{tabular}
	& 
	\begin{tabular}{c}
	     $H^1\big(X, \Omega^{n-2}_{\PP^n}((k-1)Z)|_X$ \\ $\otimes$ \\ $\O_X(\l) \otimes I_{k-1}(D)\big)$
	\end{tabular}
	\arrow[from=1-1, to=1-2]
	\arrow[from=2-1, to=2-2]
	\arrow[from=2-2, to=2-3]
	\arrow[two heads, from=1-2, to=1-3]
	\arrow[from=2-1, to=1-1]
	\arrow[two heads, from=2-2, to=1-2]
\end{tikzcd}\]
where the surjectivity of the second vertical arrow is due to the vanishing of $H^1\big(Z_0, \Omega_X^{n-3}((k-1)Z) \otimes \O_X(\l-a-d)|_{Z_0}\big)$. Hence the vanishing of $H^1\big(X, \Omega^{n-2}_{\PP^n}((k-1)Z)|_X \otimes \O_X(\l) \otimes I_{k-1}(D)\big)$ implies the vanishing of $H^1\big(X, \Omega^{n-2}_X((k-1)Z) \otimes \O_X(l) \otimes I_{k-1}(D)\big)$. 
By the spectral sequence associated to the Koszul-type resolution 
\[
    \bigoplus \O_X(-n-1) \rightarrow \bigoplus \O_X(-n) \rightarrow \bigoplus \O_X(-n+1)
\]
of $\Omega_{\PP^n}^{n-2}|_X$, this is a consequence of the vanishing of 
\[
    H^j\big(X, \O_X(\l-n+2-j) \otimes \O_X((k-1)Z) \otimes I_{k-1}(D)\big)
\]
for all $1 \le j \le 3$ (the important one is $j = 1$). All of these conditions are satisfied by the assumption; the proof is complete. 
\end{proof}


\vspace{\bigskipamount}

\section{The vanishing theorem for Hodge ideals on hypersurfaces in $\PP^3$} \label{sec_iso3}

In this case, the divisor $D$ has dimension $1$ and hence automatically has at worst isolated singularities.

\begin{proof}[Proof of Theorem \ref{vanishing_surface}]
$(1)$ For $i\ge 2$, the result follows from the Nakano vanishing theorem as in Lemma \ref{use Nakano and isosing}.

$(2)$ For $i=1$, we will again argue by induction on $k$. We have seen the base case $k=0$ in Remark \ref{I_0}. Suppose the theorem holds up to $k-1$, then it suffices to prove the vanishing for $\M_k(D) \otimes L$ under our assumptions.

Consider the complex $C^\bullet = (Sp_{k-2}(\M_{\bullet}(D)) \otimes L)[-2]$:
\[
    [\M_{k-2} \otimes \bigwedge^2 T_X \otimes L  \rightarrow \M_{k-1}(D) \otimes T_X \otimes L \rightarrow \M_k(D) \otimes L]  
\]
concentrated in degrees $0$ to $2$. The associated spectral sequence of $C^\bullet$ is 
\[
    E_1^{p,q} = H^q(X, C^p) \Rightarrow \mathbb{H}^{p+q}(X, C^\bullet).
\] 
Again, we are interested in the vanishing of $E_1^{2,1}$, and by similar argument, we only need to show that $E_1^{1,1} = E_1^{0,2} = 0$ under the given hypothesis. 

First, we prove $E_1^{1,1} = 0$. Indeed, it sits in the exact sequence
\[
    \dots \rightarrow H^{1}\big(X, \Omega^{1}_X \otimes L((k-1)Z) \otimes I_{k-1}(D)\big) 
    \rightarrow E_1^{1,1} 
    \rightarrow H^{2}\big(X, \Omega^{1}_X \otimes L((k-2)Z) \otimes I_{k-2}(D)\big)
    \rightarrow \dots,
\]
in which the last term vanishes if $L((k-1)Z-\cD)$ is ample due to the Nakano vanishing theorem (note that the last term is $0$ if $k=1$). The first term sits in the exact sequence 
\[\begin{tikzcd}[column sep = 3mm]
	\dots 
	& \begin{tabular}{c}
	     $H^1\big(X, \Omega^{1}_{\PP^3}|_X \otimes I_{k-1}(D) $ \\ $\otimes L((k-1)Z)\big)$
	\end{tabular} 
	& \begin{tabular}{c}
	     $H^{1}\big(X, \Omega^{1}_X \otimes I_{k-1}(D)$ \\ $\otimes L((k-1)Z) \big)$ 
	\end{tabular} 
	& \begin{tabular}{c}
	    $H^2\big(X, \O_X(-d) \otimes I_{k-1}(D)$ \\ $\otimes L((k-1)Z) \big)$    
	\end{tabular} 
	& \dots
	\arrow[from=1-2, to=1-3]
	\arrow[from=1-1, to=1-2]
	\arrow[from=1-3, to=1-4]
	\arrow[from=1-4, to=1-5]
\end{tikzcd}\]
By part $(1)$, the last term vanishes if $L(kZ-\cD)(4-2d)$ ample. To obtain the vanishing of the first term, by using the Koszul-type resolution of $\Omega_{\PP^3}^1|_X$, it is enough to have
\[
    H^j\big(X, L((k-1)Z)(-1-j) \otimes I_{k-1}(D)\big) = 0
\]
for all $1 \le j \le 3$. The vanishing for $j\in \{2,3\}$ follows from part $(1)$ and the case $j=1$ follows from the induction hypothesis.

Next, we finish the proof by showing the vanishing of $E_1^{0,2}$. Using the exact sequence
\[
    \dots \rightarrow H^{2}\big(X, L((k-2)Z) \otimes I_{k-2}(D)\big) 
    \rightarrow E_1^{0,2} 
    \rightarrow H^{3}\big(X, L((k-3)Z) \otimes I_{k-3}(D)\big) = 0,
\]
we are left with the vanishing of the first term. Again, by the Nakano vanishing theorem, 
\[
    H^2\big(X, L((k-2)Z) \otimes I_{k-2}(D)\big) = 0
\]
if $L((k-1)Z-\cD)(4-d)$ is ample; the assertion $(2)$ follows. 
\end{proof}

\vspace{\bigskipamount}


\section{Applications}

It is well known that the set of singular points of a nodal irreducible plane curve of degree $m$ imposes independent conditions on the linear system of plane curves of degree $m-3$. In the paper \cite{Severi}, Severi proved a similar result in dimension $2$, stating that the set of singular points of nodal hypersurfaces of degree $m$ in $\PP^3$ imposes independent conditions on hypersurfaces of degree at least $2m-5$. Later, Park and Woo generalized Severi's result to all singular points, and gave similar bounds for isolated singular points on hypersurfaces in $\PP^n$, see \cite{ParkWoo}. By using the Hodge ideals, Musta\c t\u a and Popa gave a different bound for this problem. In this section, we exploit Theorem \ref{vanishing_isolated_sing} to obtain a new result for isolated singular points on the complete intersection of two hypersurfaces in $\PP^n$.

\begin{proof} [Proof of Theorem \ref{bound of deg}]
The triviality of the Hodge ideals $I_k(D)$ is equivalent to the scheme $Z_k$ being empty (due to \cite[Theorem A]{Hodge_ideals} for the reduced divisor $D$), hence there is nothing to prove. In the case $Z_k$ is non-empty, denote by $A$ the line bundle 
\[
    \omega_X \otimes \O_X((k+1)D) \otimes \O_X(l) = \O_X((k+1)m + l + d - n -1).
\]
Consider the short exact sequence 
\[
    0 \rightarrow A \otimes I_k(D) 
    \rightarrow A 
    \rightarrow A \otimes \O_{Z_k} 
    \rightarrow 0.
\]

For $(1)$, Theorem \ref{vanishing_isolated_sing} implies that 
\[
    H^1(X, A \otimes I_k(D)) = 0.
\]
Using the associated long exact sequence
\[
    0 \rightarrow H^0(X, A \otimes I_k(D))
    \rightarrow H^0(X, A) 
    \rightarrow H^0(Z_k, A \otimes \O_{Z_k})
    \rightarrow H^1(X, A \otimes I_k(D)) \rightarrow \dots,
\]
we see that $\text{length}(Z_k)$ is bounded by the dimension of $H^0(X, A)$ which is exactly 
\[\binom{(k+1)m +l +d - 1}{n} - \binom{(k+1)m +l - 1}{n}.\]

\vspace{\medskipamount}

For $(2)$, we know by \cite[Corollary 21.3]{Hodge_ideals} that 
\[
    I_k(D) \subseteq I_{S_t},
\]
where $k = \lceil \frac{n}{t} - 1\rceil$. Since the restriction map
\[
    H^0\big(\PP^n, \O_{\PP^n}((k+1)m + l + d - n -1)\big) \twoheadrightarrow H^0(X, A)
\]
is surjective, the result follows immediately from part $(1)$ which says that there is a surjection
\[
    H^0(X, A) \twoheadrightarrow \O_{Z_k} \rightarrow 0.
\]
\end{proof}

We can extend this analysis to the study of $(j-1)$-jets along $S_t$, just as it was done in \cite[Corollary 27.3]{Hodge_ideals}. Recall that for a $0$-dimensional subset $S$ of $\PP^n$, the space of hypersurfaces of degree $a$ is said to separate $(j-1)$-jets along $S$ if the restriction map 
\[
    H^0(\PP^n, \O_{\PP^n}(a)) \rightarrow \bigoplus_{x \in S} \O_{\PP^n}/\mathfrak{m}_x^{j}
\]
is surjective. In particular, $S$ imposes independent conditions on hypersurfaces of degree $a$ if they separate $0$-jets along $S$. For $t\ge 3$, we put
\[
    k_{t,j} \coloneqq 
    \begin{cases}
        \lceil \frac{n - 1 - t + j}{t} \rceil & j \le t-1 \\
        \lceil \frac{n - 1 - t + j}{t-2} \rceil & j \ge t
    \end{cases}.
\]
We prove the following.

\begin{subprop} \label{bound of deg for jet}
    Let $D$ be a complete intersection of a smooth irreducible hypersurface $X$ of degree $d \ge 2$ and another hypersurface (not necessarily smooth) of degree $m$ in $\PP^n$, with $n \ge 4$. Assume furthermore that $D$ has at worst isolated singularities. Let $S_t$ denote the set of singular points on $D$ of multiplicity at least $t \ge 3$, and set $l = \max \{k_{t,j}(d-2), k_{t,j}(d-2) + (n-2)(d-1) - m\}$. Then the space of hypersurfaces of degree at least 
    \[
        (k_{t,j}+1)m + l + d - n - 1 
    \]
    separates $(j-1)$-jets along $S_t$, for each $j\ge 1$.  
\end{subprop}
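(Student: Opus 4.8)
The plan is to mirror the proof of Theorem \ref{bound of deg}(2), replacing the single quotient ring $\O_{Z_k}$ with the fattened structure sheaf $\bigoplus_{x \in S_t}\O_{\PP^n}/\mathfrak{m}_x^{j}$ that governs the separation of $(j-1)$-jets. First I would recall from \cite[Corollary 21.3]{Hodge_ideals} (or its $j$-jet refinement in \cite[Corollary 27.3]{Hodge_ideals}) the containment relating Hodge ideals to powers of the maximal ideals along high-multiplicity points; the two-case definition of $k_{t,j}$ is engineered precisely so that $I_{k_{t,j}}(D)$ is contained in the ideal $\bigcap_{x\in S_t}\mathfrak{m}_x^{j}$ cutting out the $(j-1)$-jets, with the split into $j\le t-1$ and $j\ge t$ accounting for the different growth rates of the Hodge ideal along a point of multiplicity $t$. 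This containment is the substitute for the simpler $I_k(D)\subseteq I_{S_t}$ used in Theorem \ref{bound of deg}.

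With that containment in hand, I would set $A = \omega_X \otimes \O_X((k+1)D)\otimes \O_X(l) = \O_X((k+1)m + l + d - n - 1)$ with $k = k_{t,j}$, exactly as in the proof of Theorem \ref{bound of deg}, and form the short exact sequence
\[
    0 \rightarrow A \otimes I_k(D) \rightarrow A \rightarrow A \otimes \O_{Z_k} \rightarrow 0.
\]
The key input is the vanishing $H^1(X, A\otimes I_k(D)) = 0$ furnished by Theorem \ref{vanishing_isolated_sing}(3), which holds precisely because $l = \max\{k(d-2), k(d-2)+(n-2)(d-1)-m\}$ meets the numerical hypothesis there (after absorbing the shift $\l = l + n + 1 - d$). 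This gives a surjection $H^0(X, A)\twoheadrightarrow H^0(Z_k, A\otimes \O_{Z_k})$. Composing with the surjectivity of the restriction $H^0(\PP^n, \O_{\PP^n}((k+1)m+l+d-n-1))\twoheadrightarrow H^0(X, A)$ from $\PP^n$ to the hypersurface $X$, I obtain a surjection from global sections of $\O_{\PP^n}$ of the stated degree onto sections over $Z_k$.

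The final step is to see that surjecting onto $H^0(Z_k, A\otimes\O_{Z_k})$ forces the separation of $(j-1)$-jets along $S_t$. Since $I_k(D)\subseteq \bigcap_{x\in S_t}\mathfrak{m}_x^{j}$, the subscheme $Z_k$ dominates $\bigoplus_{x\in S_t}\O_{\PP^n}/\mathfrak{m}_x^{j}$: there is a surjection $\O_{Z_k}\twoheadrightarrow \bigoplus_{x\in S_t}\O_{\PP^n}/\mathfrak{m}_x^{j}$ (after trivializing $A$ locally, the line bundle factor is harmless), so composing yields the desired surjectivity of the jet-separation map. The main obstacle I anticipate is not the cohomological bookkeeping, which is essentially identical to Theorem \ref{bound of deg}(2), but rather justifying the precise containment $I_{k_{t,j}}(D)\subseteq \bigcap_{x\in S_t}\mathfrak{m}_x^{j}$ with the correct value of $k_{t,j}$ in both regimes. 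This requires a careful local analysis at an isolated singular point of multiplicity $t$, using the known lower bounds for Hodge ideals in terms of multiplicity from \cite{Hodge_ideals}; the two branches of the $k_{t,j}$ formula reflect that the relevant bound behaves differently in the ranges $j\le t-1$ and $j\ge t$, and getting the ceiling functions exactly right is where the care is needed.
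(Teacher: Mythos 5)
Your proposal is correct and follows essentially the same route as the paper: the same short exact sequence and $H^1$ vanishing from Theorem \ref{vanishing_isolated_sing} as in Theorem \ref{bound of deg}, with the only new input being the containment $I_{k_{t,j}}(D) \subseteq \mathfrak{m}_x^j$ for $x \in S_t$, which the paper likewise defers to the literature (citing \cite[Theorem E]{Hodge_ideals} and \cite[Corollary 19.4]{Hodge_ideals}, rather than your \cite[Corollary 21.3]{Hodge_ideals} and \cite[Corollary 27.3]{Hodge_ideals}, but with the same content). The local analysis you flag as the main obstacle is exactly what those cited results supply, so no further argument is needed.
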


\begin{proof}
    The proof is analogous to the proof of Theorem \ref{bound of deg}, except that we use both \cite[Theorem E]{Hodge_ideals} and \cite[Corollary 19.4]{Hodge_ideals} to get 
\[
    I_{k_{t,j}}(D) \subseteq \mathfrak{m}_x^j
\]
for every $x \in S_t$.
\end{proof}

\begin{remark}
    \begin{enumerate}
        \item In the case $d = 1$ i.e $D$ is a hypersurface of $\PP^{n-1}$, Theorem \ref{bound of deg} specializes to \cite[Theorem G]{Hodge_ideals} for $z_k = 0$ and \cite[Corollary H]{Hodge_ideals}, while Proposition \ref{bound of deg for jet} specializes to \cite[Corollary 27.2]{Hodge_ideals}.
        
        \item The conditions in Theorem \ref{bound of deg} and Proposition \ref{bound of deg for jet} are not symmetric in $d$ and $m$. In fact, the bound tends to increase faster as $m$ increases, compared to $d$. This is a limitation of our method; thus in the case when $D$ is the intersection of two smooth hypersurfaces, it is much better to consider $D$ as a divisor in the smooth hypersurface with higher degree. In order to obtain symmetric bounds, a new method needs to be employed.
    \end{enumerate}
\end{remark}

\begin{example}
\hfill
    \begin{enumerate}
        \item Let $d=m=2$ and $n=4$, so that $D$ is a complete intersection of two quadrics in $\PP^4$. Theorem \ref{bound of deg} says that: 
        \begin{itemize}
            \item $S_2$ and $S_3$ impose independent conditions on hypersurfaces of degree at least $m-1$.
            \item If $t = 4$, Theorem \ref{bound of deg} shows that
            \[
                0 = H^0(X, \O_X(-1)) \twoheadrightarrow \O_{S_t}
            \]
            which means $D$ has no isolated singularities of multiplicity at least $4$.  
        \end{itemize} 
    
        \item In general, if $d=2, m \le n-2$ and $t=n$, we obtain that any reduced effective Cartier divisor on a smooth quadric hypersurface of $\PP^n$ has no isolated singularities of multiplicity at least $n$. This result can also be proven by using the Nadel vanishing theorem. 
    \end{enumerate}
\end{example}


\vspace{\bigskipamount}

\section{The vanishing theorem for divisors with arbitrary singularities} \label{sec_arbsing}

The aim of this section is to give a vanishing theorem for Hodge ideals of a divisor in a hypersurface of $\PP^n$, with arbitrary singularities. While the situation is much more complicated technically than in the isolated singularity case, we can at least provide an algorithmic approach. For simplicity, we consider the case when $D$ is a reduced effective divisor. The case when $D$ is an effective $\QQ$-divisor can be approached in the same manner but is more complicated. We will use the same method as in Section \ref{sec_iso4}. First, we proceed by induction on $k$. Given the vanishings for $I_0(D),\dots, I_{k-1}(D)$ i.e the vanishings of $H^i\big(X, \omega_X(kD) \otimes I_j(D) \otimes \O_X(l)\big)$ for $0\le j\le k-1$ and appropriate $l$ obtained from the induction process, it is equivalent to study the vanishing of $\text{gr}^F_{k}\O_X(*D)$. Next, by using Theorem \ref{Saito_type_vanishing} and the associated spectral sequence of the graded de Rham complex 
\[(\text{gr}^F_{k+1-n}\text{DR}(\O_X(*D)) \otimes \O_X(l))[-k],\]
it suffices to prove the vanishing of 
\[
    E^{k-r,i+r-1}_1 = H^{i+r-1}\big(X, \Omega_X^{n-1-r} \otimes \O_X(l) \otimes \operatorname{gr}^F_{k-r}(\O_X(*D))\big)
\]
for all $1 \le r \le k$. However, since we are not imposing any restriction on the singularities of the divisor $D$, we only have
\[
    \dim(Z_k) \le \dim(D)-1 = n-3,
\]
and this will impose more conditions in the inductive steps. 

For fixed $n,d,m$, denote by $A_{i,k,l}$ the statement 
\[H^i\big(X, \omega_X \otimes \O_X((k+1)m+l) \otimes I_k(D)\big) = 0.\]
By using a similar strategy as in the proof of Theorem \ref{vanishing_isolated_sing}, we will prove the following:

\begin{subprop}
    In the above setting, the following hold: 
    \begin{enumerate} 
    \item For $i\ge 2$,
        \[A_{i,k-1,l} \quad \text{and} \quad A_{i-1,k-r,rd} \quad \text{ for } \quad 1\le r\le k\] and 
        \[A_{j+t-1,k-r,l+(i+r-j)d-i+j-t} \quad \text{ for } \quad 1\le r\le k, i+1\le j \le i+r, 0\le t\le j-i\]
        together imply $A_{i,k,l}$.
    \item For $i=1$,
        \[A_{1,k-1,l+rd} \quad \text{ for } \quad 0\le r\le k-1,\]
        \[A_{j+t, k-r-1, l+(r-j+1)d+j-t-1} \quad \text{ for } \quad 1\le r\le k-1, i+1\le j \le i+r, 0\le t \le j-i,\]
        \[A_{j+t, k-r, l+(r-j-1)d+j-t+1} \quad \text{ for } \quad 1\le r\le k, r\le j\le n-3, 0\le t\le j+1,\]
        and 
        \[l\ge (n-r)d-n-(k-r+1)m\]
        together imply $A_{i,k,l}$.
\end{enumerate}
\end{subprop}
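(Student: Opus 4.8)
The plan is to follow the inductive scheme of the proof of Theorem \ref{vanishing_isolated_sing}(3) verbatim at the level of strategy, and to pay the price — in the form of a longer list of hypotheses — for no longer being able to invoke $\dim(Z_k)=0$. Induction is on $k$, the base case $k=0$ being the Nadel vanishing of Remark \ref{I_0}. Granting the vanishings $A_{i,k-1,\ast}$ (part (1)) or $A_{1,k-1,\ast}$ (part (2)) as the $I_{k-1}$-input, the reduction set up just before the statement — tensoring the defining sequence $0\to\omega_X((k-1)D)\otimes I_{k-1}(D)\to\omega_X(kD)\otimes I_k(D)\to\M_k(D)\to 0$, passing to $\operatorname{gr}^F_k\O_X(*D)$, and running the hypercohomology spectral sequence of $\big(\operatorname{gr}^F_{k+1-n}\operatorname{DR}(\O_X(*D))\otimes\O_X(l)\big)[-k]$ — turns the problem into proving
\[
E_1^{k-r,\,i+r-1}=H^{i+r-1}\big(X,\Omega_X^{n-1-r}\otimes\O_X(l)\otimes\operatorname{gr}^F_{k-r}(\O_X(*D))\big)=0,\qquad 1\le r\le k .
\]
Here Theorem \ref{Saito_type_vanishing} annihilates the abutment in the relevant total degree (the positivity it requires is, in part (2), exactly the numerical inequality $l\ge (n-r)d-n-(k-r+1)m$ extracted below), and the edge columns vanish by the length of the complex; so the whole content is the vanishing of the $E_1$-terms.

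For each fixed $r$ I would peel the term $E_1^{k-r,i+r-1}$ apart in two layers. The first layer is the filtration sequence
\[
0\to I_{k-r-1}(D)\otimes\O_X((k-r)D)\to I_{k-r}(D)\otimes\O_X((k-r+1)D)\to\operatorname{gr}^F_{k-r}(\O_X(*D))\to 0 ,
\]
whose long exact sequence bounds $E_1^{k-r,i+r-1}$ by a cohomology group of $\Omega^{n-1-r}_X\otimes I_{k-r}(D)$ in degree $i+r-1$ and one of $\Omega^{n-1-r}_X\otimes I_{k-r-1}(D)$ in degree $i+r$; this is the origin of the two families indexed by $k-r$ and by $k-r-1$. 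The second layer separates, for each Hodge ideal $I_{k'}(D)$ appearing, the ideal sheaf from its colength via the suitable twist of $0\to I_{k'}(D)\to\O_X\to\O_{Z_{k'}}\to 0$. The purely locally free part is killed by Nakano vanishing when $i+r-1>n-1$ and, on the Nakano boundary, by the borderline vanishing of Theorem \ref{Bott vanishing hypersurfaces}(2); this last is where the inequality $l\ge (n-r)d-n-(k-r+1)m$ is forced in part (2). The $Z_{k'}$-supported part, which in Theorem \ref{vanishing_isolated_sing} vanished outright, now only has $\dim(Z_{k'})\le n-3$, so it survives and must be fed back into the induction — this is the source of both the low-degree condition $A_{i-1,k-r,rd}$ and the upper cutoff $j\le n-3$ in part (2).

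It remains to reduce the surviving groups $H^{q}\big(X,\Omega^{n-1-r}_X\otimes I_{k'}(D)\otimes\O_X(\ast)\big)$ to the normalized statements $A_{\bullet,\bullet,\bullet}$. I would strip the exterior power one degree at a time with the conormal sequence $0\to\Omega_X^{p-1}\otimes\O_X(-d)\to\Omega^p_{\PP^n}|_X\to\Omega^p_X\to 0$ from \eqref{wedge_seq} — each step raising the cohomological degree by one, lowering the form degree by one, and twisting by $\O_X(-d)$, indexed by $j$ — and then resolve each intermediate $\Omega^\bullet_{\PP^n}|_X$ by the Koszul-type resolution of Lemma \ref{resol_Omega_p}, whose position is indexed by $t$. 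After each reduction I rewrite the resulting line-bundle cohomology $H^{\bullet}\big(X,\O_X(\ast)\otimes I_{k'}(D)\big)$ in normalized form using $\omega_X\cong\O_X(d-n-1)$ and $\O_X(D)\cong\O_X(m)$; tracking the accumulated twists then yields exactly the displayed arguments such as $A_{j+t-1,k-r,\,l+(i+r-j)d-i+j-t}$ and their part (2) analogues, with the ranges of $j$ and $t$ recording how many conormal and Koszul steps have been performed.

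The principal obstacle is not any single vanishing but the bookkeeping forced by the loss of $\dim(Z_k)=0$: every $Z_{k'}$-supported term that was discarded in Theorem \ref{vanishing_isolated_sing} now re-enters the induction at a shifted $(i,k,l)$, so one must check that the doubly-indexed families of hypotheses close up under all of these reductions and never call for a statement outside the asserted ranges. The genuinely delicate case is $i=1$, where $(i+r-1)+(n-1-r)=n-1$ puts the locally free part precisely on the Nakano boundary: there plain Nakano vanishing is unavailable, Theorem \ref{Bott vanishing hypersurfaces}(2) must be used, and the inequality $l\ge (n-r)d-n-(k-r+1)m$ has to hold on the nose for the relevant twist to be nef. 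Once these extreme terms are pinned down, the remaining reductions are mechanical, if lengthy.
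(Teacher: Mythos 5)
Your overall architecture matches the paper's: induction on $k$ with Nadel vanishing as base case, Chen's theorem (Theorem \ref{Saito_type_vanishing}) plus the spectral sequence of the shifted graded de Rham complex to reduce to the terms $E_1^{k-r,i+r-1}$, the two-term sequence (\ref{7.1}) splitting $\operatorname{gr}^F_{k-r}(\O_X(*D))$ into an $I_{k-r}$-piece and an $I_{k-r-1}$-piece, and conormal/Koszul bookkeeping landing on the statements $A_{\bullet,\bullet,\bullet}$, with Theorem \ref{Bott vanishing hypersurfaces} and the inequality $l\ge(n-r)d-n-(k-r+1)m$ entering at the $i=1$ borderline. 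The genuine gap is your ``second layer'': splitting each $I_{k'}(D)$ from its cosupport via $0\to I_{k'}(D)\to\O_X\to\O_{Z_{k'}}\to 0$, killing the locally free part by Nakano, and ``feeding back'' the supported part. The surviving groups are of the form $H^{q-1}\big(Z_{k'},\Omega^p_X\otimes L|_{Z_{k'}}\big)$ with $q-1\le n-3$ possible; these are \emph{not} of the shape $A_{i,k,l}$ (they live on a possibly positive-dimensional subscheme with form-bundle coefficients), and the only exact sequence available to control them is the very ideal sequence you started from, whose next term is the group $H^{q}\big(X,\Omega^p_X\otimes I_{k'}(D)\otimes L\big)$ whose vanishing you are trying to prove — the feedback is circular. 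The paper never performs this splitting for $i\ge 2$: the Hodge ideal is carried through all the conormal and Koszul reductions, so every output is automatically an $A$-statement. Consequently your attribution of the condition $A_{i-1,k-r,rd}$ and of the cutoff $j\le n-3$ to surviving $Z_{k'}$-terms is incorrect (as is the clause ``killed by Nakano when $i+r-1>n-1$''; the relevant point is $p+q=n+i-2$, so Nakano applies exactly when $i\ge2$).

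Relatedly, your peeling runs in only one direction (raise the cohomological degree, lower the form degree, twist by $\O_X(-d)$), whereas the stated hypothesis list encodes both directions depending on $i$. For $i\ge2$ the paper peels \emph{upward}, using $0\to\Omega^p_X\otimes F\to\Omega^{p+1}_{\PP^n}|_X\otimes F(d)\to\Omega^{p+1}_X\otimes F(d)\to0$, terminating after $r$ steps at $\omega_X$; that endpoint is precisely the condition $A_{i-1,k-r,\,l+rd}$ (stated as $A_{i-1,k-r,rd}$), and downward peeling for $i\ge2$ would instead generate a longer family of conditions with $j$ running up to roughly $n-2$ — i.e., it proves a different proposition than the one asserted. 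For $i=1$ upward peeling is unusable (it would terminate at an $H^0$, which never vanishes), so the paper peels downward and stops at the borderline term $H^{n-2}\big(X,\Omega^1_X\otimes\cdots\otimes I_{k-r}(D)\big)$; this is the one place where $\dim Z_{k-r}\le n-3$ is legitimately used, namely in the comparison diagram with $\Omega^1_{\PP^n}|_X$ (as at the end of the proof of Theorem \ref{vanishing_isolated_sing}), where the dimension bound supplies the surjectivity needed to reduce to Theorem \ref{Bott vanishing hypersurfaces} plus Koszul conditions — this is the actual source of the range $r\le j\le n-3$ and of the displayed inequality. Excising your second layer and running the peeling in the correct direction for each $i$ would essentially recover the paper's proof.
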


\noindent Notice that in the above proposition, the conditions that guarantee $A_{i,k,l}$ contain the ones that guarantee both $A_{i+1,k,l}$ and $A_{i,k-1,l}$. Roughly speaking, we need "more conditions" as $i$ gets smaller and $k$ gets larger. 

\begin{proof}
    In the base case $I_0(D)$, we see as in Remark \ref{I_0} that
\[
    H^i\big(X, \omega_X(D) \otimes \O_X(l) \otimes I_0(D)\big) = 0
\]
for all $i \ge 1$ and $l \ge 0$ by the Nadel vanishing theorem.

\vspace{\medskipamount}

For the induction steps, using the exact sequence 
\[\begin{tikzcd}[column sep = 3mm]
    \tag{7.1} \label{7.1}
	& \begin{tabular}{c}
	     $H^{i+r-1}\big(X, \Omega^{n-1-r}_X \otimes I_{k-r}(D)$ \\ 
	     $\otimes \O_X(l+(k-r+1)m)\big)$
	\end{tabular} 
	& {E_1^{k-r,i+r-1}}
	& \begin{tabular}{c}
         $H^{i+r}\big(X, \Omega^{n-1-r}_X \otimes I_{k-r-1}(D)$  \\
         $\otimes \O_X(l+(k-r)m)\big)$
    \end{tabular} 
	\arrow[from=1-2, to=1-3]
	\arrow[from=1-3, to=1-4]
\end{tikzcd},\]
we see that the vanishings of both extremal terms induce the vanishing of $E^{k-r, i+r-1}_1$.

\vspace{\medskipamount}

$\bullet$ Consider the vanishing of the last term of (\ref{7.1}) 
\[
    H^{i+r}\big(X, \Omega^{n-1-r}_X \otimes \O_X(l+(k-r)m) \otimes I_{k-r-1}(D)\big).
\]
This term is $0$ if $r=k$. Using the associated long exact sequence on cohomology of the exact sequences
\[
    0 \rightarrow \Omega^{p-1}_X \otimes \O_X(-d) \rightarrow \Omega^{p}_{\PP^n}|_X \rightarrow \Omega^p_X \rightarrow 0
\]
for $n-r \le p \le n-1$, we see that we only need to show  

\[
    H^{i}\big(X, \omega_X((k-r)D) \otimes \O_X(l+rd) \otimes I_{k-r-1}(D)\big) = 0
\]
and 
\[
    H^{j}\big(X, \Omega^{n+i-j}_{\PP^n}|_X \otimes \O_X(l+(k-r)m+(i+r+1-j)d) \otimes I_{k-r-1}(D)\big) = 0
\]
for all $i+1 \le j \le i+r$. Using the spectral sequence of the Koszul-type resolution of $\Omega_{\PP^n}^{n+i-j}|_X$
\[
    0 \rightarrow \bigoplus \O_X(-n-1)
    \rightarrow \dots
    \rightarrow \bigoplus \O_X(-n-i+j-1)
    \rightarrow \Omega^{n+i-j}_{\PP^n}|_X
    \rightarrow 0,
\]
we see that these conditions are induced by the vanishings for lower index Hodge ideals.
\[
    H^{j+t}\big(X, \omega_X((k-r)D) \otimes \O_X(l + (i+r-j)d-i+j-t) \otimes I_{k-r-1}(D)\big) = 0
\]
for all $i+1 \le j \le i+r$ and $0 \le t \le j-i$. 

\vspace{\medskipamount}

$\bullet$ Consider the vanishing of the first term of (\ref{7.1})
\[
     H^{i+r-1}\big(X, \Omega^{n-1-r}_X \otimes \O_X(l+(k-r+1)m) \otimes I_{k-r}(D)\big).
\]
There are two cases:

\underline{Case 1}: If $i\ge 2$, arguing as above, we need
\[
    H^{i-1}\big(X, \omega_X((k-r+1)D) \otimes \O_X(l+rd) \otimes I_{k-r}(D)\big) = 0
\]
and 
\[
    H^{j}\big(X, \Omega^{n+i-1-j}_{\PP^n}|_X \otimes \O_X(l+(k-r+1)m+(i+r-j)d) \otimes I_{k-r}(D)\big) = 0
\]
for all $i \le j \le i+r-1$ which are implied by the following vanishings:
\[
    H^{j+t}\big(X, \omega_X((k-r+1)D) \otimes \O_X(l + (i+r-j-1)d -i+1+j-t) \otimes I_{k-r}(D)\big) = 0
\]
for all $i \le j \le i+r-1$ and $0 \le t \le j+1-i$.

\underline{Case 2}: If $i=1$, we require the vanishings
\[
    H^{j}\big(X, \Omega^{n-1-j}_{\PP^n}|_X \otimes \O_X(l+(k-r+1)m -(j-r)d) \otimes I_{k-r}(D)\big) = 0
\]
for $r \le j \le n-3$ and
\[
    H^{n-2}\big(X, \Omega_X^1 \otimes \O_X(l+(k-r+1)m-(n-2-r)d) \otimes I_{k-r}(D)\big) = 0.
\]
Since $\dim(Z_{k-r}) \le n-3$, by using the same argument at the end of the proof of Theorem \ref{vanishing_isolated_sing}, the vanishing of 
\[
    H^{n-2}\big(X, \Omega^1_X \otimes \O_X(l+(k-r+1)m-(n-2-r)d) \otimes I_{k-r}(D)\big)
\]
is implied by the vanishings of
\[
    H^{n-2}\big(X, \Omega^1_{\PP^n}|_X \otimes \O_X(l+(k-r+1)m-(n-2-r)d) \otimes I_{k-r}(D)\big)
\] and 
\[
    H^{n-2}\big(X, \Omega^1_X \otimes \O_X(l+(k-r+1)m-(n-2-r)d)\big),
\]
which follow from Theorem \ref{Bott vanishing hypersurfaces}.
Thus, in this case, we require the following set of conditions:
\[
    l \ge (n-r)d - n - (k-r+1)m
\]
and
\[
    H^{j+t}\big(X, \omega_X((k-r+1)D) \otimes \O_X(l - (j-r+1)d  + j + 1 - t)) \otimes I_{k-r}(D)\big) = 0
\]
for all $r \le j \le n-3$ and $0 \le t \le j + 1$.
\end{proof}
 

\vspace{\medskipamount}

Although the algorithm looks complicated, it turns out that many things simplify when implemented in special cases. For example, we can work out $A_{i,1,l}$ as follows:
\begin{subprop}
    Let $X$ be a smooth hypersurface of degree $d\ge 2$ in $\PP^n$ with $n \ge 4$, and $D$ a reduced effective divisor. Let $\O_X(D) = \O_X(m)$ for some positive integer $m$. Then: 
    \begin{enumerate}
        \item For $i\ge 2$ and $l \ge 0$
        \[
            H^i(X, \omega_X(2D) \otimes I_1(D) \otimes \O_X(D)) = 0.
        \]
        \item For $l \ge \max\{(n-2)d, (n-1)d - n -m\}$, then  
        \[
            H^1(X, \omega_X(2D) \otimes I_1(D) \otimes \O_X(D)) = 0. 
        \] 
    \end{enumerate}
\end{subprop}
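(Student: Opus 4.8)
The plan is to derive this as the case $k=1$ of the preceding Proposition, observing that at $k=1$ every auxiliary vanishing produced by that algorithm terminates at the base case $I_0(D)$, which is governed by Nadel vanishing. Write $A_{i,1,l}$ for the assertion $H^i\big(X,\omega_X(2D)\otimes I_1(D)\otimes\O_X(l)\big)=0$; this is exactly the statement to be proved (with the final twist read as $\O_X(l)$). I would first peel off the graded piece $\M_1(D)$ using
\[
0 \to \omega_X(D)\otimes I_0(D)\otimes\O_X(l) \to \omega_X(2D)\otimes I_1(D)\otimes\O_X(l) \to \M_1(D)\otimes\O_X(m+l) \to 0,
\]
whose left-hand term has $H^i=0$ for $i\ge1$ and $l\ge0$ by the Nadel vanishing for $I_0(D)$ (this is $A_{i,0,l}$). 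Thus it suffices to kill $H^i\big(X,\M_1(D)\otimes\O_X(m+l)\big)$, and this is what the algorithm accomplishes through the Spencer spectral sequence $E_1^{p,q}=H^q(X,C^p)\Rightarrow\mathbb{H}^{p+q}(X,C^\bullet)$ of the length-two complex $C^\bullet=\big(Sp_{2-n}(\M_\bullet(D))\otimes\O_X(m+l)\big)[-1]$, whose top cohomology $\mathbb{H}^{\ge2}(C^\bullet)$ vanishes by Theorem \ref{Saito_type_vanishing}.

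For part $(1)$, with $i\ge2$, I would invoke the first clause of the algorithm at $k=1$. The only admissible shift is $r=1$, for which $k-r=0$, so every condition it generates has the shape $A_{\bullet,0,\bullet}$, i.e.\ a Nadel vanishing for $I_0(D)$; running through the indices $A_{i,0,l}$, $A_{i-1,0,d}$, $A_{i,0,l+1}$, $A_{i+1,0,l}$ shows each cohomological degree is at least $1$ and each twist is nonnegative as soon as $l\ge0$. Hence part $(1)$ is immediate.

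For part $(2)$, with $i=1$, I would invoke the second clause of the algorithm at $k=1$. Its middle family $A_{j+t,k-r-1,\bullet}$ is vacuous, since the range $1\le r\le k-1$ is empty; the term $A_{1,k-1,l+rd}$ collapses to $A_{1,0,l}$, which needs only $l\ge0$; and the family $A_{j+t,0,\bullet}$ (indexed by $1\le j\le n-3$, $0\le t\le j+1$) again consists of Nadel vanishings for $I_0(D)$. After discarding the degenerate degrees using $H^j(X,-)=0$ for $j>\dim X=n-1$, one checks that all of their twists are nonnegative once $l\ge(n-2)d$. The single non-formal ingredient is the boundary term $H^{n-2}\big(X,\Omega^1_X\otimes\O_X(\,\cdot\,)\big)$, which lies on the Nakano boundary $i+p=n-1$ and so is invisible to ordinary Nakano vanishing; it is instead supplied by the borderline Nakano Theorem \ref{Bott vanishing hypersurfaces}, and this is precisely the source of the constraint $l\ge(n-1)d-n-m$. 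Collecting $l\ge0$, $l\ge(n-2)d$, and $l\ge(n-1)d-n-m$ gives the stated bound.

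I expect the main obstacle to be exactly this boundary term. Because $D$ is allowed arbitrary singularities, $\dim Z_0\le n-3$ is in general strictly positive, so one cannot annihilate the contributions supported on $Z_0$ purely on dimensional grounds, as was possible in the isolated-singularity case treated earlier. Concretely, the corner $E_1^{0,1}=H^1\big(X,\Omega^{n-2}_X\otimes I_0(D)\otimes\O_X(m+l)\big)$ must be transported through the conormal and Euler sequences onto $\Omega^1_{\PP^n}|_X$ and then $\Omega^1_X$, at which point Theorem \ref{Bott vanishing hypersurfaces} applies. Arranging that the several twists appearing in this reduction are all simultaneously dominated by $(n-2)d$ is routine arithmetic, but it is the step that fixes the final numerical bound and must be carried out with care.
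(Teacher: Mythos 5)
Your proposal is correct and is essentially the paper's intended argument: the proposition is exactly the instantiation at $k=1$ of the preceding algorithmic Proposition, where the families $A_{\bullet,k-r-1,\bullet}$ become vacuous, every remaining auxiliary condition collapses to a Nadel vanishing $A_{\bullet,0,\bullet}$ for $I_0(D)$ (whose twists are indeed dominated by $l\ge(n-2)d$), and the borderline term $H^{n-2}\big(X,\Omega^1_X\otimes\O_X(l+m-(n-3)d)\big)$ handled by Theorem \ref{Bott vanishing hypersurfaces} yields precisely the constraint $l\ge(n-1)d-n-m$. Your bookkeeping of the Spencer complex $Sp_{2-n}(\M_\bullet(D))[-1]$, the identification $E_1^{0,1}=H^1\big(X,\Omega^{n-2}_X\otimes I_0(D)\otimes\O_X(m+l)\big)$ via $T_X\otimes\omega_X\cong\Omega^{n-2}_X$, and the reduction through the conormal/Koszul sequences all match the paper's method.
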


\vspace{\bigskipamount}

\printbibliography

\Addresses
\end{document}